\DeclareMathOperator{\Aut}{Aut}
\DeclareMathOperator{\GL}{GL}
\newcommand{\Z}{{\mathbb Z}}
\newtheorem{theorem}{Theorem}[section]
\newtheorem{proposition}[theorem]{Proposition}
\newtheorem{lemma}[theorem]{Lemma}
\newtheorem{definition}[theorem]{Definition}
\newtheorem{corollary}[theorem]{Corollary}
\newtheorem*{remark}{Remark}
\begin{document}

\title{The $R_\infty$--property for nilpotent quotients of Baumslag--Solitar groups.}
\author[K.\ Dekimpe]{Karel Dekimpe}\thanks{K.~Dekimpe is supported by long term structural funding -- Methusalem grant of the Flemish Government.}
\author[D.\ Gon\c{c}alves]{Daciberg Gon\c{c}alves}\thanks{D. ~L. ~Gon\c calves is partially supported by Projecto Tem\'atico  Topologia Alg\'ebrica, Geom\'etrica e Diferencial FAPESP no. 2016/24707-4.}
\address{KU Leuven Campus Kulak Kortrijk\\
E. Sabbelaan 53\\
8500 Kortrijk\\
Belgium}
\email{karel.dekimpe@kuleuven.be}
\address{Departemento de Matem\'atica--IME--USP\\ 
Universidade de S\~{a}o Paulo\\ 
S\~{a}o Paulo\\
Brasil}
\email{dlgoncal@ime.usp.br}
\keywords{Twisted conjugacy; Reidemeister classes; $R_{\infty}$--property; Baumslag--Solitar groups; Nilpotent groups; Lower central series.}
\subjclass[2010]{ Primary: 20E36; secondary: 20F14,  20F18.}

\maketitle

\begin{abstract} A group $G$ has the $R_{\infty}$--property if the number $R(\varphi)$ of twisted conjugacy classes is infinite for any automorphism $\varphi$ of $G$. For such a group $G$, the $R_{\infty}$--nilpotency index is the least integer $c$ such that 
$G/\gamma_{c+1}(G)$ still has the $R_\infty$--property. In this paper,  we determine the  $R_{\infty}$--nilpotency 
degree 
of all Baumslag--Solitar groups.  
\end{abstract}
\section{Introduction}

Any endomorphism $\varphi$ of a group $G$ determines an equivalence relation on $G$ by setting $x\sim y \Leftrightarrow 
\exists z \in G:\; x = z y \varphi(z)^{-1}$. The equivalence classes of this relation are called Reidemeister classes or twisted conjugacy classes and their number is denoted by $R(\varphi)$. We are most interested in this number when 
$\varphi$ is an automorphism. 

\medskip

For information on the  development, historical aspects and the relation of this concept with other topics in mathematics such as fixed point theory, we refer the reader to the introduction of \cite{DG2}  and its references. An important concept in this context is that of groups having the $R_\infty$--property.

\begin{definition}  A group $G$ is said to have  the $R_{\infty}$--property if for every automorphism $\varphi: G \to G$  the number 
 $R(\varphi)$ is  infinite.
 \end{definition}
 
  A central problem is to decide which groups have 
 the $R_{\infty}$--property.  The study of this problem
  has  been a quite active  research topic in recent years.
  Several families of groups have been studied by many authors. A non-exhaustive list of references is \cite{DG, DG1, DG2, Fe, FG,FN, FT, GW, GW1,JLL,STW}.

\medskip

Of  particular interest for this paper is the fact that in \cite{FG} it was proved that the  Baumslag-Solitar groups $BS(m,n)$ have the  $R_{\infty}$--property except for 
$m=n=1$ (or $m=n=-1$ which is the same group). 
Recently in \cite{DG2}, motivated by the results of \cite{DG},    new examples of
groups which have the    $R_{\infty}$--property  were obtained 
by looking at quotients of a  group which has the $R_{\infty}$--property by the terms of the lower central series as well the derived central series.  
 So it is natural to  ask the same question for the groups $BS(m,n)$.

Related to this approach, we introduced in \cite{DG2} the following notion:
   
\begin{definition}Let $G$ be a group. The $R_{\infty}$--nilpotency 
degree of a group  $G$  is the least integer $c$ such that $G/\gamma_{c+1}(G)$
has the $R_{\infty}$--property. In case this integer does not exist then we say that $G$ has $R_{\infty}$--nilpotency degree infinite.
\end{definition}

   In this work we determine the  $R_{\infty}$--nilpotency degree for all the Baumslag--Solitar groups
$BS(m,n)$. 
 The main results of this work are:
 
 \medskip
 
 {\bf Theorem 4.5}
Let $m,n$ be integers with $0<m\leq |n|$ and $\gcd(m,n)=1$. Let $p$ denote the largest integer such that 
$2^p|2 m +2$. Then, the $R_\infty$--nilpotency degree $r$ of $BS(m,n)$ is given by 
\begin{itemize}
\item In case $n<0$ and $n\neq -1$, then $r=2$.
\item In case $n=-1$ (so $m=1$) then $r=\infty$.
\item In case $n=m$ (so $n=m=1$) then $r=\infty$.
\item In case $n-m=1$, then $r=\infty$.
\item In case $n-m=2$, then $r=p+2$.
\item In case $n-m\geq 3$, then $r=2$.
\end{itemize}

and   

\medskip

{\bf Theorem 5.4}
Let $0<m \leq |n|$ with $m\neq n$ and take $d=\gcd(m,n)$. 
Let $p$ denote the largest integer such that 
$2^p|2 \frac{m}{d} +2$. Then, the $R_\infty$--nilpotency degree $r$ of $BS(m,n)$ is given by 
\begin{itemize}
\item In case $n<0$ and $n\neq -m$, then $r=2$.
\item In case $n=-m$ then $r=\infty$.
\item In case $n=m$  then $r=\infty$.
\item In case $n-m=d$, then $r=\infty$.
\item In case $n-m=2d$, then $2\leq r\leq p+2$.
\item In case $n-m\geq 3d$, then $r=2$.
\end{itemize}
 
 \medskip
 
 At this point we would also like to mention 
one interesting family  of groups which extends naturally the class of  Baumslag--Solitar groups, namely the family of $GBS$ groups, the Generalized Baumslag--Solitar groups. In \cite{Le} the following strong result about the Reidemeister number 
of a homomorphims of such groups is proved:
 
\medskip
 
{\bf Proposition \cite[Proposition 2.7]{Le}}  Let $\alpha : G \to G$ be an endomorphism of a non-elementary 
$GBS$ group. If one of the following conditions holds, then $R(\alpha)$ is infinite:\\
(1) $\alpha$ is surjective.\\
(2) $\alpha$ is injective and $G$ is not unimodular.\\
(3) $G = BS(m, n)$ with $|m|\ne   |n|$, and the image of $\alpha$ is not cyclic.

\medskip

 As a generalization of the results of this paper, it would be natural to study the $R_{\infty}$--nilpotency degree for the $GBS$ groups.

\medskip 

This work is divided into  3 sections besides the introduction. In section~2  we provide some preliminary results about the description of the terms of the lower central series and the corresponding quotients of $BS(m,n)$, notoriously when the 
integers $(m,n)$ are coprime.      In section~3 we construct certain specific nilpotent groups in a format which is
convenient for our study. Then we identify these groups which the ones that we want to study, namely the quotients 
$BS(m,n)/\gamma_{c+1}(BS(m,n))$.   In section 4, we then show the main result for the $BS(m,n)$ groups where $m$ and $n$ are coprime. Finally, in section 5  we provide a proof for the remaining cases.

\section{Baumslag--Solitar groups}
 Let \[ BS(m,n)=\langle a, b \;|\; a^{-1} b^m a = b^n \rangle \] for $m,n$ integers. It suffices to consider 
$1 \leq m \leq |n|$. We will use the notation   $[x, y]=x^{-1}y^{-1}xy$. 
\begin{lemma}\label{order} Consider a Baumslag--Solitar group $BS(m,n)$.\\
For all positive integers $k$ we have that 
\[ b^{(m-n)^k}\in \gamma_{k+1}(BS(m,n)).\]
\end{lemma}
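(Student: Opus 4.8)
The plan is to argue by induction on $k$, using the defining relation $a^{-1}b^m a = b^n$ as the only engine. Writing $N = m-n$ for brevity, the goal is to show $b^{N^k}\in\gamma_{k+1}(BS(m,n))$ for every $k\geq 1$. The base case $k=1$ comes from a direct commutator computation: since $[b^m,a]=b^{-m}a^{-1}b^m a = b^{-m}b^n = b^{n-m}=b^{-N}$, and this element is a commutator of two elements of $G=\gamma_1(BS(m,n))$, it lies in $\gamma_2(BS(m,n))$; hence $b^{N}\in\gamma_2(BS(m,n))$.

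For the inductive step I would assume $b^{N^k}\in\gamma_{k+1}(BS(m,n))$ and try to push $b^{N^{k+1}}$ into $\gamma_{k+2}(BS(m,n))$. The naive attempt is to evaluate $[b^{N^k},a]$, which automatically lies in $[\gamma_{k+1}(BS(m,n)),G]\subseteq\gamma_{k+2}(BS(m,n))$; the difficulty is that to simplify $a^{-1}b^{N^k}a$ through the relation one needs $m$ to divide the exponent $N^k$, for which there is no reason in general. This divisibility gap is the main obstacle, and the fix is to feed the $m$-th power into the commutator instead. Because $\gamma_{k+1}(BS(m,n))$ is a subgroup, $b^{mN^k}=(b^{N^k})^m$ still lies in $\gamma_{k+1}(BS(m,n))$, and now the relation applies cleanly: $a^{-1}b^{mN^k}a=(a^{-1}b^m a)^{N^k}=b^{nN^k}$.

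Carrying out the computation then gives
\[ [b^{mN^k},a]=b^{-mN^k}\,a^{-1}b^{mN^k}a = b^{-mN^k}b^{nN^k}=b^{(n-m)N^k}=b^{-N^{k+1}}. \]
Since $b^{mN^k}\in\gamma_{k+1}(BS(m,n))$ and $a\in G$, the left-hand side lies in $[\gamma_{k+1}(BS(m,n)),G]=\gamma_{k+2}(BS(m,n))$, so $b^{-N^{k+1}}\in\gamma_{k+2}(BS(m,n))$, and therefore $b^{N^{k+1}}\in\gamma_{k+2}(BS(m,n))$, closing the induction. I expect the only delicate point to be the bookkeeping of exponents, confirming that inserting the factor $m$ is precisely what lets the defining relation act while keeping the element in the correct term of the lower central series.
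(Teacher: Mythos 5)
Your proof is correct and follows essentially the same route as the paper: both arguments induct on $k$ and use the key device of raising the inductively obtained element to the $m$-th power so that the defining relation $a^{-1}b^m a=b^n$ can be applied inside a commutator with $a$, landing the result in the next term of the lower central series. The only difference is the (immaterial) choice of commutator ordering.
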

\begin{proof}
As $a^{-1} b^{-m} a = b^{-n}$, we have that $b^{m-n}=[a,b^m]\in \gamma_2(BS(m,n))$, which proves the lemma for $k=1$.

Now, we assume  that $k\geq 1$ and that  $b^{(m-n)^k}\in \gamma_{k+1}(BS(m,n))$. Then we find:
\begin{eqnarray*}
\lefteqn{b^{-m(m-n)^k}\in \gamma_{k+1}(BS(m,n))}\\
& \Rightarrow & a^{-1}  b^{-m(m-n)^k} a  b^{m(m-n)^k} \in \gamma_{k+2}(BS(m,n))\\
& \Rightarrow & \left( a^{-1} b^m a\right)^{-(m-n)^k} b^{m(m-n)^k} \in \gamma_{k+2}(BS(m,n))\\
& \Rightarrow & b^{  -n (m-n)^k } b^{m(m-n)^k}= b^{(m-n)^{k+1}} \in \gamma_{k+2}(BS(m,n))
\end{eqnarray*}
which proves the lemma, by induction.
\end{proof}

Since we will be dealing with nilpotent quotients of the Baumslag--Solitar groups, 
we introduce the following notation
\[ BS_c(m,n)=  \frac{BS(m,n)}{\gamma_{c+1}(BS(m,n))}.\]
For a nilpotent group $N$, we use $\tau N$ to indicate its torsion subgroup.

\begin{lemma} \label{hirsch}Let $m\neq n$.
For all positive integers $c$, the nilpotent group $BS_c(m,n)$ has Hirsch length 1 and 
if we denote by $\bar{b}$ the natural projection of $b$ in $BS_c(m,n)$, we have that 
\[ \tau BS_c(m,n)= \langle \bar{b}, \gamma_2( BS_c(m,n) )\rangle.\]
\end{lemma}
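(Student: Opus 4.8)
The plan is to present $BS_c(m,n)$ as an extension $1 \to H \to BS_c(m,n) \to \Z \to 1$ with $H = \langle \bar b, \gamma_2(BS_c(m,n)) \rangle$ finite, from which both assertions follow at once. First I would abelianize: the defining relation becomes $\bar b^{m-n} = 1$, so $BS(m,n)/\gamma_2(BS(m,n)) \cong \Z\langle \bar a\rangle \oplus (\Z/|m-n|)\langle \bar b\rangle$, and in particular $\bar b$ is torsion of order dividing $|m-n|$ (this also follows from Lemma~\ref{order}). The assignment $a \mapsto 1$, $b \mapsto 0$ then defines a surjection $BS(m,n) \to \Z$, which factors through a surjection $\bar\varphi \colon BS_c(m,n) \to \Z$ because $\Z$ is abelian. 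Its kernel clearly contains $H$, and conversely $BS_c(m,n)/H$ is obtained from the abelianization by additionally killing $\bar b$, leaving $\Z\langle\bar a\rangle$; hence $H = \ker\bar\varphi$ and $BS_c(m,n)/H \cong \Z$. Since this quotient is torsion-free we get $\tau BS_c(m,n) \subseteq H$, and the whole statement reduces to showing that $H$ is finite.

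Because $\bar b$ is already torsion and $\gamma_2$ is normal, $H$ is finite as soon as $\gamma_2(BS_c(m,n))$ is finite, and this is the crux. I would prove that every quotient $\gamma_i(BS_c(m,n))/\gamma_{i+1}(BS_c(m,n))$ with $i \geq 2$ is finite; feeding these finite layers through the finite filtration $\gamma_2 \supseteq \gamma_3 \supseteq \cdots \supseteq \gamma_{c+1} = 1$ then shows $\gamma_2(BS_c(m,n))$ is finite. For this I would pass to the associated graded Lie ring $\mathrm{gr} = \bigoplus_i \gamma_i/\gamma_{i+1}$, whose degree-one part is the abelianization above, with generators $\alpha = \bar a$ and $\beta = \bar b$, the latter of order dividing $|m-n|$. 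Each homogeneous component of degree $i$ is spanned by left-normed brackets $[x_1, \ldots, x_i]$ with each $x_j \in \{\alpha, \beta\}$; any such bracket whose entries are all $\alpha$ vanishes, so every surviving generator in degree $i \geq 2$ involves at least one $\beta$.

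The key technical fact I rely on is that the induced bracket on $\mathrm{gr}$ is $\Z$-biadditive. Writing $t = |m-n|$, I can then absorb the scalar $t$ into a slot occupied by $\beta$ and obtain $t \cdot [x_1, \ldots, x_i] = [\ldots, t\beta, \ldots] = 0$. Thus every generator of $\gamma_i/\gamma_{i+1}$ with $i \geq 2$ is annihilated by $t$, so each such finitely generated abelian group is a finite torsion group. I expect the main obstacle to be exactly this point: making sure the biadditivity of the graded bracket is invoked correctly, and that the degree-$i$ part really is generated by left-normed brackets in $\alpha$ and $\beta$ (which rests on $\mathrm{gr}$ being generated as a Lie ring in degree one). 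Once the torsion of $\beta$ is transported through the bracket, the rest is bookkeeping.

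Finally, with all higher layers finite, $\gamma_2(BS_c(m,n))$ is finite, hence so is $H$, giving $H \subseteq \tau BS_c(m,n)$; together with the reverse inclusion this yields $\tau BS_c(m,n) = \langle \bar b, \gamma_2(BS_c(m,n))\rangle$. Additivity of the Hirsch length along $1 \to H \to BS_c(m,n) \to \Z \to 1$ then gives $h(BS_c(m,n)) = h(\Z) + h(H) = 1$, completing the proof.
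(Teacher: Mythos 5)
Your proposal is correct and follows essentially the same route as the paper: reduce everything to the finiteness of $\gamma_2(BS_c(m,n))$ using that $BS_c(m,n)/\langle \bar b,\gamma_2\rangle\cong\Z$ is torsion-free, kill each lower central layer using the torsion of $\bar b$ in the abelianization, and finish with additivity of the Hirsch length. The only difference is cosmetic: you package the layer-by-layer finiteness via multilinearity in the associated graded Lie ring (each layer of degree $\geq 2$ is uniformly annihilated by $|m-n|$ because every nonzero left-normed bracket contains a $\beta$), whereas the paper runs a direct induction with the congruence $[x,y]^k\equiv[x,y^k]\bmod\gamma_{i+2}$ — the same commutator identity in different clothing.
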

\begin{proof}
We first consider the case $c=1$.
Note that 
\[ BS_1(m,n)=\langle \bar a,\bar b\,|\, [\bar a,\bar b]=1,\; \bar b^{m-n}=1\rangle \cong \Z \oplus \Z_{|m-n|}.\]
So $\tau BS_1(m,n)= \langle \bar b \rangle$.

Now, let $c>1$.  From the case $c=1$, it follows that $\tau BS_c(m,n) \subseteq \langle \bar b,\; \gamma_2(BS_c(m,n)) \rangle$ and hence it 
suffices to show that $\gamma_2 (BS_c(m,n))$ is a torsion group. To obtain this result,
we prove by induction on $i\geq 2$ that $\gamma_i(BS_c(m,n))/\gamma_{i+1}(BS_c(m,n))=\gamma_i(BS(m,n))/\gamma_{i+1}(BS(m,n))$
is finite.

The group $\gamma_2(BS(m,n))/\gamma_3(BS(m,n))$ is generated by $[a,b]\gamma_3(BS(m,n))$. By the previous lemma,
we know that $b^{m-n}\in \gamma_2(BS(m,n))$, using this we find:
\[ [a,b]^{m-n} \gamma_3(BS(m,n)) = [ a, b^{m-n}] \gamma_3(BS(m,n)) = 1 \gamma_3(BS(m,n)) \]
and so $ [a,b]\gamma_3(BS(m,n))$ is of finite order ($\leq |m-n|$) in $\gamma_2(BS(m,n))/\gamma_3(BS(m,n))$.

\medskip

Now, assume that $\gamma_i(BS(m,n))/\gamma_{i+1}(BS(m,n))$ is finite. The group $\gamma_{i+1}(BS(m,n))/\gamma_{i+2}(BS(m,n))$ is generated by all elements of the form $[x,y]\gamma_{i+2}(BS(m,n))$ where $x\in BS(m,n)$ and $y\in\gamma_i(BS(m,n))$. By our assumption, there is a $k>0$ so that $y^k\in \gamma_{i+1}(BS(m,n))$. As before   then follows that 
$[x,y]^k\gamma_{i+2}(BS(m,n)) = [x,y^k]\gamma_{i+2}(BS(m,n)) = 1 \gamma_{i+2}(BS(m,n))$,  from which we deduce that 
$\gamma_{i+1}(BS(m,n))/\gamma_{i+2}(BS(m,n))$ is finite.

\medskip

The fact that $BS_c(m,n)$ has Hirsch length 1, follows from the fact that $BS_c(m,n)/\gamma_2(BS_c(m,n))\cong BS_1(m,n)$ has Hirsch length 1 
and $\gamma_2(BS_c(m,n))$ has Hirsch length 0.

\end{proof}

In this paper the situation where $\gcd(m,n)=1$ will play a rather crucial role. For these groups, the structure of
$BS_c(m,n)$   is easier to understand than in the general case. E.g., we have the following lemma.
\begin{lemma} Suppose that $\gcd(m,n)=1$ and $m\neq n$. For any $c>1$ and $k>1$, we have that 
\[ \gamma_k (BS_c(m,n))=\langle \bar b^{(m-n)^{k-1}} \rangle.\] 
Again $\bar b$ denotes the projection of $b$ in $BS_c(m,n)$.
\end{lemma}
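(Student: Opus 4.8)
The plan is to reduce everything to the observation that $\langle\bar b\rangle$ is a finite cyclic \emph{normal} subgroup on which $\bar a$ acts by a unit, so that $BS_c(m,n)$ is essentially a semidirect product $\langle \bar b\rangle \rtimes \langle \bar a\rangle$ and its lower central series can be read off by elementary arithmetic modulo the order of $\bar b$. First I would pin down that order $N$. By Lemma~\ref{order} with $k=c$ we have $b^{(m-n)^c}\in\gamma_{c+1}(BS(m,n))$, so $\bar b^{(m-n)^c}=1$ in $BS_c(m,n)$ and hence $N\mid (m-n)^c$. Since $\gcd(m,n)=1$ forces $\gcd(m,m-n)=\gcd(n,m-n)=1$, this yields the two crucial coprimalities $\gcd(m,N)=\gcd(n,N)=1$.

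Next I would show that $H=\langle\bar b\rangle$ is normal. Because $\gcd(m,N)=1$ we have $\langle\bar b^m\rangle=H$, and conjugating by $\bar a$ and using the defining relation gives $\bar a^{-1}H\bar a=\langle\bar a^{-1}\bar b^m\bar a\rangle=\langle\bar b^n\rangle$, which equals $H$ because $\gcd(n,N)=1$. Thus $H$ is normalized by $\bar a$ (and trivially by $\bar b$), so $H\trianglelefteq BS_c(m,n)$. Consequently $BS_c(m,n)/H$ is cyclic, hence abelian, giving $\gamma_2\subseteq H$; and since $H$ is cyclic, conjugation by $\bar a$ is $\bar b\mapsto\bar b^{r}$ for a unique $r\in(\Z/N)^\times$, with $rm\equiv n\pmod N$ forced by the relation. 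Rewriting this as $m(1-r)\equiv m-n\pmod N$ is the identity that converts all exponents below.

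Finally, with $H$ abelian and normal and $\bar a$ acting by $\bar b\mapsto\bar b^r$, I would compute the lower central series inside $H$ by induction. Since $\gamma_{k+1}=[\gamma_k,BS_c(m,n)]$ and the quotient is abelian, applying conjugation amounts to multiplying exponents by $r-1$: if $\gamma_k=\langle\bar b^{(m-n)^{k-1}}\rangle$ then $[\bar b^{(m-n)^{k-1}},\bar a]=\bar b^{(r-1)(m-n)^{k-1}}$, and every other basic commutator and its $BS_c(m,n)$-conjugates lie in $\langle\bar b^{(r-1)(m-n)^{k-1}}\rangle$ (here one uses that $r$ is a unit mod $N$, so raising to powers of $r$ preserves cyclic subgroups). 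Because $\gcd(m,N)=1$, the identity $m(1-r)\equiv m-n$ gives $\gcd\!\big((r-1)(m-n)^{k-1},N\big)=\gcd\!\big((m-n)^k,N\big)$, so $\langle\bar b^{(r-1)(m-n)^{k-1}}\rangle=\langle\bar b^{(m-n)^k}\rangle$. This completes the induction from the base case $\gamma_2=\langle\bar b^{1-r}\rangle=\langle\bar b^{m-n}\rangle$, while the reverse inclusions $\bar b^{(m-n)^{k-1}}\in\gamma_k$ are exactly Lemma~\ref{order}.

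The main obstacle, and the only place the hypothesis $\gcd(m,n)=1$ is genuinely needed, is establishing the normality of $\langle\bar b\rangle$ together with the fact that $\bar a$ acts as a unit $r\in(\Z/N)^\times$. Everything afterwards is bookkeeping of exponents modulo $N$, where the repeated use of the invertibility of $m$ mod $N$ lets us freely replace an exponent by $m^{-1}$ times it without changing the generated cyclic subgroup.
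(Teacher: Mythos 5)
Your proof is correct, but it takes a genuinely different route from the paper. The paper argues entirely inside the lower central quotients: it shows by induction that $\gamma_k/\gamma_{k+1}$ is generated by $[\bar a,c_{k-1}]\gamma_{k+1}$, uses Lemma~2.1 to see that this generator has order dividing $m-n$, and then exploits $\gcd(m,m-n)=1$ to replace the generator by its $m$-th power and collapse it to $\bar b^{(m-n)^{k-1}}\gamma_{k+1}$ via $[\bar a,\bar b^{km}]=\bar b^{k(m-n)}$. You instead establish the global structure first: $\langle\bar b\rangle$ is a finite cyclic \emph{normal} subgroup of order $N\mid(m-n)^c$ with $\gcd(mn,N)=1$, on which $\bar a$ acts as a unit $r$ with $rm\equiv n\pmod N$, so that every commutator $[x,h\bar a^j]$ with $x\in\langle\bar b\rangle$ becomes $x^{r^j-1}$ and the whole lower central series is read off by multiplying exponents by $r-1\equiv -m^{-1}(m-n)\pmod N$. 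The two arguments use the same essential inputs (Lemma~2.1 for the order bound and the invertibility of $m$ modulo $m-n$), but yours buys more: it effectively anticipates the structural description $BS_c(m,n)\cong\Z_{N}\rtimes\Z$ that the paper only obtains later via the groups $G_c(m,n)$ and the Smith normal form computation, whereas the paper's quotient-by-quotient induction is more elementary and avoids having to prove normality of $\langle\bar b\rangle$ up front. One small redundancy: your final appeal to Lemma~2.1 for the reverse inclusions is unnecessary, since your inductive computation already yields the equality $\gamma_{k+1}=\langle\bar b^{(r-1)(m-n)^{k-1}}\rangle=\langle\bar b^{(m-n)^{k}}\rangle$ outright.
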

\begin{proof} For sake of simplicity, we will write $\Gamma_i$     instead 
of $\gamma_i(BS_c(m,n))$ in the rest of this proof.
We will prove by induction on $k\geq 2$ that $\bar  b^{(m-n)^{k-1}} \Gamma_{k+1}$ generates 
$ \Gamma_{k}/ \Gamma_{k+1}$.

For $k=2$, we have that $[\bar a,\bar b]\Gamma_3$ generates $\Gamma_2/\Gamma_3$ and 
from Lemma ~\ref{order} we know that $[\bar a, \bar b]^{m-n} \in \Gamma_3$, hence, the 
order of  $[\bar a,\bar b]\Gamma_3$ in $\Gamma_2/\Gamma_3$ is a divisor of $m-n$. As $\gcd(m,n)=1$, also 
$\gcd(m,m-n)=1$ and therefore also $[\bar a,\bar b]^m\Gamma_3$ is a generator of $\Gamma_2/\Gamma_3$.
Now,
\[ [\bar a, \bar b]^m \Gamma_3 = [ \bar a ,\bar b^m]\Gamma_3 = \bar b^{m-n}\Gamma_3,\]
 from which we find that $\bar b^{m-n}\Gamma_3$ generates $\Gamma_2/\Gamma_3$.

\medskip

Now, we assume that $k> 2$ and that $\Gamma_{k-1}/\Gamma_k$ is generated by $\bar b^{(m-n)^{k-2}}\Gamma_k$. The 
next quotient $\Gamma_k/\Gamma_{k+1}$ is then generated by $[\bar a, \bar b^{(m-n)^{k-2}}]\Gamma_{k+1}$.
By Lemma~\ref{order} again, we have that 
\[ [\bar a, \bar b^{(m-n)^{k-2}}]^{m-n}\Gamma_{k+1}=  [\bar a, \bar b^{(m-n)^{k-1}}] \Gamma_{k+1}= 1 \Gamma_{k+1}\]
and so the order of the generator $[\bar a, \bar b^{(m-n)^{k-2}}]\Gamma_{k+1}$ divides $m-n$. As before, it follows that 
also $[\bar a, \bar b^{(m-n)^{k-2}}]^m\Gamma_{k+1}$ generates $\Gamma_{k}/\Gamma_{k+1}$. 
In $BS(m,n)$ we have that $[a,b^{km}]= b^{k(m-n)}$, which we now use to obtain that 
\[ [\bar a, \bar b^{(m-n)^{k-2}}]^m\Gamma_{k+1} = [\bar a, \bar b^{ m (m-n)^{k-2} }]\Gamma_{k+1}=\bar{b}^{(m-n)^{k-1}}\Gamma_{k+1},\]
which finishes the proof. 
\end{proof}

\begin{corollary}\label{metabelian}
Suppose that $\gcd(m,n)=1$ and $m\neq n$, then for all $c\geq 1$ we have that $\tau BS_c(m,n)= \langle \bar b \rangle$
\end{corollary}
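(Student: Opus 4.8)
The plan is to read off the result almost directly from Lemma~\ref{hirsch} together with the preceding lemma. Lemma~\ref{hirsch} already identifies the torsion subgroup as $\tau BS_c(m,n)=\langle \bar b,\;\gamma_2(BS_c(m,n))\rangle$ for every $c\geq 1$, so the only thing that remains is to check that the commutator part $\gamma_2(BS_c(m,n))$ contributes nothing beyond $\langle\bar b\rangle$, i.e.\ that $\gamma_2(BS_c(m,n))\subseteq\langle\bar b\rangle$. Once this is in place, the two expressions for the generating set coincide and the corollary follows.

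First I would dispose of the base case $c=1$ separately. Here $BS_1(m,n)\cong \Z\oplus\Z_{|m-n|}$ is abelian, so $\gamma_2(BS_1(m,n))$ is trivial, and the explicit computation recorded in the proof of Lemma~\ref{hirsch} already gives $\tau BS_1(m,n)=\langle\bar b\rangle$ directly.

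For $c>1$ I would invoke the preceding lemma with $k=2$: under the coprimality hypothesis $\gcd(m,n)=1$ it yields $\gamma_2(BS_c(m,n))=\langle \bar b^{(m-n)^{2-1}}\rangle=\langle\bar b^{m-n}\rangle$. Since $\bar b^{m-n}$ is a power of $\bar b$, this gives $\gamma_2(BS_c(m,n))\subseteq\langle\bar b\rangle$, whence $\langle \bar b,\;\gamma_2(BS_c(m,n))\rangle=\langle\bar b\rangle$. Substituting back into the description from Lemma~\ref{hirsch} produces $\tau BS_c(m,n)=\langle\bar b\rangle$, as claimed.

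I expect no genuine obstacle here: the corollary is essentially a bookkeeping consequence of the two lemmas, the real content having already been carried out in establishing that the coprime lower central series collapses onto powers of $\bar b$. The one subtlety to keep in mind is that the preceding lemma is stated only for $c>1$, which is exactly why the abelian case $c=1$ has to be handled on its own via the explicit isomorphism rather than by the general argument.
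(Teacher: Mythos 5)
Your argument is correct and is exactly the (implicit) derivation the paper intends: the corollary is stated without proof precisely because it follows by combining Lemma~\ref{hirsch} with the case $k=2$ of the preceding lemma, which gives $\gamma_2(BS_c(m,n))=\langle \bar b^{\,m-n}\rangle\subseteq\langle\bar b\rangle$. Your separate handling of $c=1$ via $BS_1(m,n)\cong\Z\oplus\Z_{|m-n|}$ correctly covers the one case the preceding lemma does not address.
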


\section{Some nilpotent quotients of Baumslag--Solitar groups.}
For the rest of this section we assume that $m\neq n$. 
For any positive integer $c$, we will construct a nilpotent group $G_c(m,n)$ of class $\leq c$ which can be seen 
as a quotient of $BS(m,n)$. To construct this group, we fix $m$, $n$ and $c$   and consider the 
morphism 
$\varphi: \Z^c \rightarrow \Z^c$ which is represented by the matrix:
\begin{equation}\label{matrix-phi}
\left( \begin{array}{cccccc}
n-m & 0 & 0 & \cdots & 0 & 0 \\
-m & n-m & 0 & \cdots & 0 & 0 \\
0 & -m & n-m & \cdots & 0 & 0\\
\vdots & \vdots & \vdots & \ddots & \vdots & \vdots  \\
0 & 0 & 0 & \cdots & n-m& 0\\
0 & 0 & 0 & \cdots & -m & n-m
\end{array}\right).
\end{equation}
Here we use the convention that elements of $\Z^c$ are written as columns, so also in the matrix above, the image of the $i$-th standard generator of $\Z^c$ is given by the $i$-th column of that matrix. We now consider the 
abelian group 
\[ A_c(m,n)= \frac{\Z^c}{{\rm Im}\, \varphi}.\]
So $A_c(m,n)$ is a finite group of order $|n-m|^c$. 

We consider also the morphism $\psi:\Z^c\rightarrow \Z^c$, which is represented by 
\[ M= \left( \begin{array}{cccccc}
1 & 0 & 0 & \cdots & 0 & 0 \\
1 & 1& 0 & \cdots & 0 & 0 \\
0 & 1 & 1 & \cdots & 0 & 0\\
\vdots & \vdots & \vdots & \ddots & \vdots & \vdots  \\
0 & 0 & 0 & \cdots & 1& 0\\
0 & 0 & 0 & \cdots & 1 & 1
\end{array}\right).\]
We have that $\varphi \psi = \psi \varphi$ and therefore $\psi$ induces an automorphism of   $A_c(m,n)$, which we will also denote by the symbol $\psi$. 

Now, we are ready to define the group $G_c(m,n)$, which is given as a semi-direct product
\[  G_c(m,n) = A_c(m,n) \rtimes \langle t \rangle, \]
where $\langle t \rangle$ is the infinite cyclic group and where the semi-direct product structure is given by the requirement that 
\[ \forall a \in A_c(m,n): t^{-1} a t = \psi(a). \]

For any $z\in \Z^c$, we use $\overline{z}=z + {\rm Im}\,\varphi$ to denote its natural projection in $A_c(m,n)$. We let 
$e_1,\,e_2,\, \ldots,\, e_c$ denote the standard generators of $\Z^c$, so $e_i$ is the column vector having a 1 on the $i$-th spot and $0$'s on all other positions. Obviously, we have that 
$\overline{e_1}, \overline{e_2}, \ldots, \overline{e_c}$ generate $A_c(m,n)$. 
For  sake of simplicity, sometimes we will write $G$ instead of $G_c(m,n)$.
\begin{remark}
It is easy to see that  from the fact that 
\[ \overline{e_2} = t^{-1} \overline{e_1} t \overline{e_1}^{-1}, \;
\overline{e_3} = t^{-1} \overline{e_2} t \overline{e_2}^{-1},\; \cdots\]
follows 
\[ \gamma_2(G) \subseteq \langle \overline{e_2}, \, \overline{e_3},\, \ldots ,\, \overline{e_c} \rangle \]
\[ \gamma_3(G) \subseteq \langle \overline{e_3}, \, \overline{e_4},\, \ldots ,\, \overline{e_c} \rangle \]
\[ \vdots \]
\[ \gamma_c(G) \subseteq \langle  \overline{e_c} \rangle \]
\[ \gamma_{c+1}(G) =1\]
hence $G_c(m,n)$ is nilpotent of class $\leq c$.
\end{remark}

\begin{lemma}\label{relation}
There is a surjective morphism of groups 
\[ f: BS(n,m) \rightarrow G_c(m,n) \]
which is determined by 
\[ f(a) = t \mbox{ and } f(b)= \overline{e_1}.\]
\end{lemma}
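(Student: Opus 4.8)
The plan is to invoke von Dyck's theorem. Since the Baumslag--Solitar group is given by a single defining relation, a map defined on the two generators $a,b$ extends to a group homomorphism precisely when that one relation is respected by the images. So I would simply set $f(a)=t$ and $f(b)=\overline{e_1}$ and verify that these images satisfy the defining relation inside $G_c(m,n)$; surjectivity would then be read off directly from the Remark preceding the lemma.

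For the relation, the only input I need is how $t$ acts on $\overline{e_1}$ by conjugation. By the definition of the semidirect product we have $t^{-1}\overline{e_1}\,t=\psi(\overline{e_1})$, and reading off the first column of $M$ gives $\psi(e_1)=e_1+e_2$, hence $t^{-1}\overline{e_1}\,t=\overline{e_1}\,\overline{e_2}$. Because $A_c(m,n)$ is abelian, raising to a power splits over the factors, and comparing $t^{-1}\overline{e_1}^{\,m}t=\psi(\overline{e_1})^m$ with $\overline{e_1}^{\,n}$ reduces the defining relation $t^{-1}\overline{e_1}^{\,m}t=\overline{e_1}^{\,n}$ (the relation of the presentation $\langle a,b\mid a^{-1}b^ma=b^n\rangle$, the same group as $BS(n,m)$) to the single membership statement $m(e_1+e_2)-n\,e_1\in{\rm Im}\,\varphi$. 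Here the computation telescopes: $m(e_1+e_2)-n\,e_1=(m-n)e_1+m\,e_2=-\varphi(e_1)$, which lies in ${\rm Im}\,\varphi$ by construction, since the first column of $\varphi$ is exactly $(n-m)e_1-m\,e_2$. Thus the relation holds in $G_c(m,n)$ and $f$ is a well defined homomorphism.

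For surjectivity, I would show that the image of $f$ already contains a generating set of $G_c(m,n)$. It contains $t=f(a)$ and $\overline{e_1}=f(b)$, and then the chain of identities isolated in the Remark, namely $\overline{e_2}=t^{-1}\overline{e_1}\,t\,\overline{e_1}^{-1}$, $\overline{e_3}=t^{-1}\overline{e_2}\,t\,\overline{e_2}^{-1}$, and so on, produces $\overline{e_2},\overline{e_3},\dots,\overline{e_c}$ one after another, each as a product of elements already shown to lie in the image. Since $\overline{e_1},\dots,\overline{e_c}$ generate $A_c(m,n)$ and $t$ generates the complementary infinite cyclic factor, the image is all of $G_c(m,n)$.

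The only delicate point is the relation check: one must track the direction of the semidirect product action correctly, use abelianness of $A_c(m,n)$ to separate the power, and then recognize that the resulting vector is precisely $-\varphi(e_1)$, after which the membership in ${\rm Im}\,\varphi$ is immediate. Surjectivity, by contrast, is a direct consequence of the generation pattern already recorded in the Remark, so I expect it to be routine.
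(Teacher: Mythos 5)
Your proposal is correct and follows essentially the same route as the paper: both verify the single defining relation $a^{-1}b^ma=b^n$ by computing $t^{-1}\overline{e_1}^{\,m}t=\overline{e_1}^{\,m}\overline{e_2}^{\,m}$ and recognizing the discrepancy $(m-n)e_1+me_2$ as $-\varphi(e_1)\in{\rm Im}\,\varphi$, and both obtain surjectivity from the chain $\overline{e_{i+1}}=t^{-1}\overline{e_i}\,t\,\overline{e_i}^{-1}$. The only cosmetic difference is that you work additively in $\Z^c$ where the paper writes the same telescoping multiplicatively in $A_c(m,n)$.
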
 
\begin{proof}
In order for $f$ to be a morphism, we need to check that $f$ preserves the defining relation of $BS(n,m)$, that is 
the relation 
\[
t^{-1} \overline{e_1}^m t = \overline{e_1}^n 
\]
should hold. This follows from the following computation:
\begin{eqnarray*}
t^{-1} \overline{e_1}^m t  & = & \psi (\overline{e_1}^m) \\
& = & \overline{e_1}^m \overline{e_2}^m\\
& = & \overline{e_1}^n \left( \overline{e_1}^{m-n} \overline{e_2}^m\right) \\
& = &  \overline{e_1}^n \overline{\varphi( -e_1)}\\
& = & \overline{e_1}^n .
\end{eqnarray*}
To prove that $f$ is a surjective map, it is enough to show that $\overline{e_1}$ and $t$ generate $G_c(m,n)$.
This follows from the fact that 
\[ \overline{e_2} = t^{-1} \overline{e_1} t \overline{e_1}^{-1}, \;
\overline{e_3} = t^{-1} \overline{e_2} t \overline{e_2}^{-1},\; \cdots\]
\end{proof}

As $G_c(m,n)$ is nilpotent of class $\leq c$, $f$ induces a surjective morphism 
\[BS_c(m,n)= \frac{BS(m,n)}{\gamma_{c+1}(BS(m,n))} \longrightarrow G_c(m,n).\]

For $G_c(m,n)$ we have that $\tau G= A$ and so $[ \tau G ,\tau G] =1$.
\begin{proposition}\label{isomorphism}
The morphism $f:BS(m,n) \rightarrow G_c(m,n)$ induces an isomorphism
\[ \mu : \frac{BS_c(m,n)}{[\tau BS_c(m,n),\tau BS_c(m,n)]}\rightarrow G_c(m,n).\]
\end{proposition}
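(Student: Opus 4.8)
The plan is to establish injectivity of $\mu$; its existence and surjectivity are essentially already in place. The discussion preceding the statement produces a surjection $\bar f\colon BS_c(m,n)\to G$ and records that $\tau G=A$ is abelian. Since any homomorphism sends torsion to torsion, $\bar f(\tau BS_c(m,n))\subseteq \tau G=A$, so $\bar f$ kills $[\tau BS_c(m,n),\tau BS_c(m,n)]$ and factors through the asserted surjection $\mu\colon H\to G$, where $H=BS_c(m,n)/[\tau BS_c(m,n),\tau BS_c(m,n)]$. It then remains only to show $\ker\mu=1$.

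First I would reduce the problem to the torsion subgroups. By Lemma~\ref{hirsch}, $BS_c(m,n)$ has Hirsch length $1$, $\tau BS_c(m,n)=\langle \bar b,\gamma_2(BS_c(m,n))\rangle$, and (using $BS_1(m,n)\cong\Z\oplus\Z_{|m-n|}$) the quotient $BS_c(m,n)/\tau BS_c(m,n)$ is infinite cyclic, generated by the class of $\bar a$. Passing to $H$, the image of $\tau BS_c(m,n)$ is exactly $\tau H$, and similarly $G/\tau G=G/A=\langle t\rangle\cong\Z$. Since $\mu$ sends the class of $\bar a$ to $t$, it induces an isomorphism $H/\tau H\to G/\tau G$; hence $\mu^{-1}(\tau G)=\tau H$, so that $\ker\mu\subseteq\tau H$ and $\mu$ restricts to a surjection $\tau H\twoheadrightarrow A$. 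As both groups are finite, $\mu$ is an isomorphism as soon as $|\tau H|\le |A|=|n-m|^c$, and this is the inequality I would prove.

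The heart of the argument is to present $\tau H$. Let $\bar B$ be the normal closure of $\bar b$ in $BS_c(m,n)$; since $BS_c(m,n)/\bar B\cong\langle\bar a\rangle\cong\Z$, the subgroup $\bar B$ has Hirsch length $0$, hence is finite, so $\bar B=\tau BS_c(m,n)$ and $\tau H=\bar B/[\bar B,\bar B]$ is the abelianization of $\bar B$. This abelian group is a cyclic module over $\Z[s,s^{-1}]$, with $s$ acting as conjugation by $\bar a$ and with generator $\beta_0$ the class of $\bar b$: the relation $a^{-1}b^m a=b^n$ yields $(ms-n)\beta_0=0$, while $\gamma_{c+1}(BS_c(m,n))=1$ yields $(s-1)^c\beta_0=0$. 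Writing $u=s-1$ and $\beta_i=u^i\beta_0$, the elements $\beta_0,\dots,\beta_{c-1}$ generate $\tau H$ as an abelian group, and applying $u^i$ to $(ms-n)\beta_0=0$ gives the relations $(n-m)\beta_i-m\beta_{i+1}=0$ for $0\le i\le c-1$ (with $\beta_c=0$). Up to a sign change of the generators these are exactly the columns of the matrix $\varphi$ in (\ref{matrix-phi}) defining $A$; indeed a short computation as in Lemma~\ref{relation} shows $\mu(\beta_i)=\overline{e_{i+1}}^{\,\pm 1}$. Consequently $\tau H$ is a quotient of $\Z^c/\mathrm{Im}\,\varphi=A$, which gives $|\tau H|\le|n-m|^c$ and finishes the proof.

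The step I expect to be the main obstacle is this presentation of $\tau H$: one must check that, after abelianizing $\bar B$, the nilpotency condition contributes no relation beyond $(s-1)^c\beta_0=0$, that $\beta_0,\dots,\beta_{c-1}$ really do $\Z$-generate, and one must track signs carefully so that the resulting relation matrix is genuinely $\varphi$ rather than merely a matrix of the same size and determinant. All of this is a computation inside the finite module $\bar B^{\,ab}$, so no essentially new idea beyond Lemmas~\ref{order}, \ref{hirsch} and~\ref{relation} should be required.
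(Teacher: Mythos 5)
Your proposal is correct, and its outer skeleton coincides with the paper's: existence and surjectivity of $\mu$ from $[\tau G,\tau G]=1$, reduction to $\ker\mu\subseteq\tau H$ via Hirsch length $1$, and then the single inequality $\#\tau H\le |n-m|^c=\#\tau G$. Where you genuinely diverge is in how that inequality is obtained. The paper filters $\tau H$ by the lower central series: it shows each $\gamma_i(H)/\gamma_{i+1}(H)$ is cyclic, generated by the iterated commutator $c_i=[\bar a,[\bar a,\ldots,[\bar a,\bar b]]]$ (here the hypothesis $[\tau,\tau]=1$ is used to discard the generators involving $[\bar b,\cdot]$), proves $c_i^{m-n}\in\gamma_{i+1}(H)$ by induction, and multiplies the $c$ bounds of $|m-n|$ together. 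You instead identify $\tau BS_c(m,n)$ with the normal closure $\bar B$ of $\bar b$, so that $\tau H=\bar B^{\mathrm{ab}}$ is a cyclic $\Z[s,s^{-1}]$--module on $\beta_0$, and extract the relations $(ms-n)\beta_0=0$ and $(s-1)^c\beta_0=0$ to exhibit $\tau H$ as a quotient of $\Z[u]/(u^c,\,mu-(n-m))\cong\Z^c/\mathrm{Im}\,\varphi=A_c(m,n)$. The two arguments use the same underlying facts (abelianness of $\tau$ makes conjugation factor through $\langle \bar a\rangle$, and powers of $s-1$ are iterated commutators with $\bar a$), but yours is packaged more structurally and actually yields slightly more: an explicit surjection $A_c(m,n)\twoheadrightarrow\tau H$ rather than only a cardinality bound, which makes the injectivity of $\mu|_{\tau H}$ transparent. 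One remark: the worry you flag at the end is not actually an obstacle, since for the inequality you only need that the listed relations \emph{hold} (extra relations would only shrink $\tau H$), and the claim that $\mu(\beta_i)=\overline{e_{i+1}}^{\pm1}$ is not needed at all; the counting argument already closes the proof.
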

\begin{proof}
We already explained that $f$ induces a morphism $\nu: BS_c(m,n)\rightarrow G_c(m,n)$. Of course 
$\nu(\tau BS_c(m,n)) \subseteq \tau G$ and so $\nu[\tau BS_c(m,n),\tau BS_c(m,n)]\subseteq [\tau G , \tau G]=1$. Therefore, there is an induced morphism 
\[ \mu:  \frac{BS_c(m,n)}{[\tau BS_c(m,n),\tau BS_c(m,n)]}\rightarrow G_c(m,n).\]
As $f$ is surjective, we know that $\mu$ is surjective too. 
In Lemma~\ref{hirsch}, we showed that $BS_c(m,n)$ has Hirsch length 1. Then also the quotient $BS_c(m,n)/[\tau BS_c(m,n),\tau BS_c(m,n)]$
has Hirsch length 1, since we 
take the quotient by a finite subgroup.  As also, by construction, $G_c(m,n)$ has Hirsch length 1 and $\mu$ is surjective, we must have that the Kernel of $\mu$ has Hirsch length 0, i.e.\ the Kernel of $\mu$ has to be finite.
For sake of simplicity we introduce the following notation:
\[ H=    \frac{BS_c(m,n)}{[\tau BS_c(m,n),\tau BS_c(m,n)]} \]
We already know, by Lemma~\ref{hirsch}, that 
$\tau H$ is generated by $\bar b$ and $\gamma_2(H)$. (Now, $\bar b$ denotes the image of $b$ in $H$).
As $\mu$ is surjective and has finite kernel (so Ker$(\mu)\subseteq \tau H$), we know that $\mu(\tau H)= \tau G_c(m,n)$. Therefore,
in order to prove that $\mu$ is injective, it is enough to show that $\# \tau H \leq \#\tau G_c(m,n)=|m-n|^c$. 

To be able to find a bound on $\# \tau H$, we look at the quotiens $\gamma_i(H)/\gamma_{i+1}(H)$.
\begin{itemize}
\item $\gamma_2(H)/\gamma_3(H)$ is generated by $[\bar a,\bar b]\gamma_3(H)$.
\item Then, $\gamma_3(H)/\gamma_4(H)$ is generated by 
$[\bar a ,[ \bar a, \bar b]] \gamma_4(H)$ and $[\bar b , [\bar a, \bar b]]\gamma_4 (H)$. However, in $H$ we 
have that $[\bar b , [\bar a, \bar b]]=1$ (since we divide out $[\tau BS_c(m,n), \tau BS_c(m,n)]$).\\
So $\gamma_3(H)/\gamma_4(H)$ is generated by $[\bar a ,[ \bar a, \bar b]] \gamma_4(H)$.
\item Continuing by induction, we find that 
$\gamma_i(H)/\gamma_{i+1}(H)$ is a cyclic group generated by 
\[  [\bar a,[ \bar a, [\bar a, \ldots ,[\bar a ,\bar b]]]] \gamma_{i+1}(H) \mbox{  (with $i-1$ times $\bar a$)}.\]
\end{itemize}
We already know that $\# \tau H /\gamma_2(H)=|m-n|$ (so $\bar b^{m-n}\gamma_2(H)=1\gamma_2(H)$, see the proof of Lemma~\ref{hirsch}).
Let $c_1=\bar{b}$ and for $i>1$ we let 
$c_i=   [\bar a,[ \bar a, [\bar a, \ldots ,[\bar a ,\bar b]]]]$ (with $i-1$ times $\bar{a}$).
Then $\gamma_i(H)/\gamma_{i+1}(H)$ is generated by $c_i\gamma_{i+1}(H)$ for $i>1$ and 
$\tau(H)/\gamma_2(H)$ is generated by $c_1\gamma_2(H)$.\\
We now show by induction on $i$ that $c_i^{m-n} \gamma_{i+1}(H)= 1 \gamma_{i+1} (H)$ and hence
$\# \gamma_i(H)/\gamma_{i+1}(H)\leq |m-n|$. 
We already obtained the case $i=1$. Now assume, 
the result holds for $c_{i-1}$ (with $i>1$), then $c_i =  [\bar a, c_{i-1}]$ and we have that 
\[ c_i^{m-n}\gamma_{i+1}(H) = [\bar a , c_{i-1}]^{m-n} \gamma_{i+1}(H) = 
[\bar a , c_{i-1}^{m-n}] \gamma_{i+1}(H) = 1 \gamma_{i+1}(H).\] 
As a conclusion, we find that 
\[ \# \tau(H) = \# \frac{\tau H}{\gamma_2(H)} \times 
                           \# \frac{\gamma_2(H)}{\gamma_3(H)}
                           \times \cdots \times
                           \# \frac{\gamma_c(H)}{\gamma_{c+1}(H)} \leq |m-n|^c= \# \tau G_c(m,n).\]
We can conclude that $\mu$ is injective (and hence an isomorphism).
\end{proof}

\begin{corollary}\label{isomorph} In case $\gcd(m,n)=1$ the morphism $f:BS(m,n) \rightarrow G_c(m,n)$ induces an isomorphism
\[BS_c(m,n)\cong G_c(m,n).\]
\end{corollary}
\begin{proof}
It follows from Corollary~\ref{metabelian} that in this case $[\tau BS_c(m,n),\tau BS_c(m,n)]=1$.
\end{proof}
\section{The case where $\gcd(m,n)=1$.}

In the next  lemma, we will make use of the Smith normal form, details about this normal form can e.g.\ be found in  \cite{S}.   
\begin{lemma}
Let $a,b\in \Z$ with $\gcd(a,b)=1$. Then, the Smith normal form of the $n\times n$--matrix
\[A_n=\left( \begin{array}{ccccccc}
a & 0 & 0 & 0 & \cdots &  0 & 0 \\
b & a & 0 & 0 & \cdots & 0 & 0 \\
0 & b & a & 0 & \cdots & 0 & 0 \\
0 & 0 & b & a & \cdots & 0 & 0 \\
\vdots & \vdots & \vdots & \vdots & \ddots & \vdots & \vdots \\
0 & 0 & 0 & 0 &\cdots & a & 0 \\
0 & 0 & 0 & 0 & \cdots & b & a \end{array}\right)\]
equals
\[ \left( \begin{array}{cccccc}
1 & 0 & 0 &  \cdots &  0 & 0 \\
0 & 1 & 0 &  \cdots & 0 & 0 \\
0 & 0 & 1 &  \cdots & 0 & 0 \\
 \vdots & \vdots & \vdots & \ddots & \vdots & \vdots \\
0 & 0 & 0 &\cdots & 1 & 0 \\
0 & 0 & 0 & \cdots & 0 & a^n \end{array}\right)\]
\end{lemma}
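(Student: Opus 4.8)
The plan is to determine the Smith normal form through its \emph{determinantal divisors} rather than by explicitly carrying out a sequence of elementary row and column operations. Recall that for an integer matrix the $k$-th determinantal divisor $d_k$ is the greatest common divisor of all $k\times k$ minors (with the convention $d_0=1$), that these satisfy the divisibility chain $d_0\mid d_1\mid\cdots\mid d_n$, and that the diagonal entries $s_k$ of the Smith normal form are recovered as $s_k=d_k/d_{k-1}$ (up to units). It therefore suffices to prove that $d_1=\cdots=d_{n-1}=1$ while $d_n=a^n$, since then $s_1=\cdots=s_{n-1}=1$ and $s_n=a^n$, which is exactly the claimed form.

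The top divisor is immediate: as $A_n$ is lower triangular with every diagonal entry equal to $a$, its determinant is $a^n$, so $d_n=a^n$ (the sign, equivalently the choice of associate, is irrelevant here). The heart of the argument is the equality $d_{n-1}=1$, and I would establish it by exhibiting just two particular $(n-1)\times(n-1)$ minors whose greatest common divisor is already $1$. Deleting the last row and last column of $A_n$ leaves precisely the matrix $A_{n-1}$, whose determinant is $a^{n-1}$. Deleting instead the first row and the last column leaves an upper bidiagonal matrix carrying $b$ on its diagonal and $a$ on its superdiagonal, hence a minor equal to $b^{n-1}$. Consequently $d_{n-1}$ divides $\gcd(a^{n-1},b^{n-1})$, and since $\gcd(a,b)=1$ forces $\gcd(a^{n-1},b^{n-1})=1$, we conclude $d_{n-1}=1$.

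It then only remains to invoke the divisibility chain $d_1\mid d_2\mid\cdots\mid d_{n-1}=1$, which immediately forces $d_1=\cdots=d_{n-1}=1$ and so pins down all the invariant factors. The one point that I expect will require looking at the matrix concretely—and the only place where a small error could creep in—is the bookkeeping for the second minor: one must verify that after striking the first row and the last column the surviving entries genuinely form a triangular block with the $b$'s along the diagonal, the subdiagonal $b$'s of $A_n$ having slid onto the diagonal of the submatrix. Everything else is purely formal, so I do not anticipate a serious obstacle; the argument avoids the messy $GL_n(\Z)$ reductions that a direct normalization would entail.
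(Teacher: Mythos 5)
Your proof is correct, and it takes a genuinely different route from the paper. You work with the determinantal divisors: $d_n=\det A_n=a^n$ since the matrix is lower triangular, and $d_{n-1}=1$ because the two $(n-1)\times(n-1)$ minors obtained by deleting the last row and column (giving $a^{n-1}$) and by deleting the first row and last column (giving the upper bidiagonal matrix with $b$ on the diagonal, hence $b^{n-1}$) are coprime when $\gcd(a,b)=1$; the divisibility chain then kills all lower divisors and the invariant factors $s_k=d_k/d_{k-1}$ come out as claimed. Your bookkeeping for the second minor is right: striking the first row and last column shifts the subdiagonal $b$'s onto the diagonal of the submatrix, which is triangular with determinant $b^{n-1}$. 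The paper instead proves a strengthened statement by induction on $n$, replacing the top-left entry $a$ by $a^k$ and exhibiting explicit matrices $P,Q\in\GL(2,\Z)$ (built from a B\'ezout relation $\alpha a^k+\beta b=1$) that peel off one row and column at a time, reducing $A_n(k)$ to $1\oplus A_{n-1}(k+1)$. The trade-off is the usual one: the paper's argument is constructive and produces the unimodular transformations realizing the normal form, while yours is shorter, needs no induction or strengthening of the statement, and only requires computing two determinants; it also transparently covers the degenerate case $a=0$ (where $b=\pm1$ and $a^n=0$).
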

\begin{proof}
We will prove a slightly more general version of this lemma  and consider for any 
positive integer $k$ the matrix $A_n(k)$ which is the same matrix as $A_n$, except that the first entry of $A_n(k)$ (so on the first row and the first column) is $a^k$ 
  instead of $a$. So $A_n=A_n(1)$. 
We will now show by induction on $n$, that the Smith normal form of $A_n(k)$ equals
\[ \left( \begin{array}{ccccc}
1 & 0 &   \cdots &  0 & 0 \\
0 & 1 &   \cdots & 0 & 0 \\
 \vdots & \vdots &  \ddots & \vdots & \vdots \\
0 & 0 &\cdots & 1 & 0 \\
0 & 0 & \cdots & 0 & a^{n-1+k} \end{array}\right)\]
When $n=1$, there is nothing to show, so we assume that $n>1$.\\
As $\gcd(a,b)=1$, there exist integers $\alpha,\beta \in \Z$ such that $\alpha a^k + \beta b=1$. 
Now, consider 
\[ P=\left( \begin{array}{cc} \alpha & \beta \\ - b & a^k\end{array}\right) \in \GL(2,\Z) \mbox{ and }
Q=\left( \begin{array}{cc} 1 & - a \beta \\0 & 1 \end{array}\right) \in \GL(2,\Z).\]
It is now easy to compute that (with $I_{n-2}$ the $(n-2)\times (n-2)$ identity matrix)
\[ \left( \begin{array}{cc}
P & 0 \\ 0 & I_{n-2} \end{array}\right) A_n(k) \left( \begin{array}{cc}
Q & 0 \\ 0 & I_{n-2} \end{array}\right)=
\left( \begin{array}{cc} 1 & 0 \\ 0 & A_{n-1}(k+1) \end{array}\right). \]  
By induction, we know the Smith normal form of $A_{n-1}(k+1)$ and hence also of 
$ \left( \begin{array}{cc} 1 & 0 \\ 0 & A_{n-1}(k+1) \end{array}\right)$, which is then exactly 
\[ \left( \begin{array}{ccccc}
1 & 0 &   \cdots &  0 & 0 \\
0 & 1 &   \cdots & 0 & 0 \\
 \vdots & \vdots &  \ddots & \vdots & \vdots \\
0 & 0 &\cdots & 1 & 0 \\
0 & 0 & \cdots & 0 & a^{n-1+k} \end{array}\right)\]
as claimed.
\end{proof}

\begin{corollary}
Let $m,n$ be two integers with $\gcd(m,n)=1$ and $m\neq n$. Then 
\[ A_c(m,n)\cong \Z_{|m-n|^c}.\]
\end{corollary}
\begin{proof}
Recall that $A_c(m,n)=\displaystyle \frac{\Z^c}{{\rm Im}\, \varphi}$ where $\varphi:\Z^c\to \Z^c$ is represented by the matrix $\eqref{matrix-phi}$. The lemma above shows that the Smith normal form of this matrix is
\[ \left( \begin{array}{ccccc}
1 & 0 &   \cdots &  0 & 0 \\
0 & 1 &   \cdots & 0 & 0 \\
 \vdots & \vdots &  \ddots & \vdots & \vdots \\
0 & 0 &\cdots & 1 & 0 \\
0 & 0 & \cdots & 0 & |n-m|^c \end{array}\right)\]
from which the result follows.
\end{proof}

For the rest of this section we will indeed assume that $m\neq n$ and that $\gcd(m,n)=1$ (so $BS_c(m,n)\cong G_c(m,n)$, see Corollary~\ref{isomorph}), moreover
we will use $s=\overline{e_1}$ to denote the canonical projection of the first standard generator 
of $\Z^c$ in the group $A_c(m,n)$. As $A_c(m,n)$ is a cyclic group and $A_c(m,n)$ is generated as a $\langle t \rangle$--module by $s$, it follows that $s$ is also a generator of $A_c(m,n)$ as a cyclic group.
So
\[  \langle s \rangle=A_c(m,n)\cong \Z_{|n-m|^c} \mbox{ \ and \ } G_c(m,n) = \langle s \rangle \rtimes \langle t \rangle.\]
\begin{proposition}\label{nu}
Let $m\neq n $ and $\gcd(m,n)=1$, then $ G_c(m,n) = \langle s \rangle \rtimes \langle t \rangle$, with
\[ t^{-1}s t = s^\nu,\]
where $\nu\in\Z$ is an integer satisfying 
\begin{itemize}
\item $\gcd(\nu,n-m)=1$ and
\item $\nu m \equiv n \bmod |n-m|^c$.
\end{itemize}
\end{proposition}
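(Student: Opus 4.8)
We know $G_c(m,n)=\langle s\rangle\rtimes\langle t\rangle$ with $\langle s\rangle=A_c(m,n)\cong\Z_{|n-m|^c}$ cyclic, so the action of $t$ by conjugation on the cyclic group $\langle s\rangle$ must be an automorphism, hence of the form $t^{-1}st=s^{\nu}$ for some integer $\nu$ that is invertible modulo $|n-m|^c$. This immediately forces $\gcd(\nu,|n-m|^c)=1$, and therefore $\gcd(\nu,n-m)=1$, which is the first claimed property. So the real content is to pin down $\nu$ via the congruence $\nu m\equiv n\bmod|n-m|^c$, and to confirm such a $\nu$ exists.

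The plan is to extract the relation directly from the defining presentation. By Lemma~3.5, $f$ is a morphism and the defining relation $t^{-1}\overline{e_1}^{m}t=\overline{e_1}^{n}$ holds in $G_c(m,n)$; rewriting with $s=\overline{e_1}$ this says $t^{-1}s^{m}t=s^{n}$. On the other hand, from $t^{-1}st=s^{\nu}$ we get $t^{-1}s^{m}t=(t^{-1}st)^{m}=s^{\nu m}$. Comparing the two expressions yields $s^{\nu m}=s^{n}$ in $\langle s\rangle\cong\Z_{|n-m|^c}$, which is exactly the congruence $\nu m\equiv n\bmod|n-m|^c$.

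It then remains to verify that a single integer $\nu$ can simultaneously satisfy both conditions, i.e.\ that the congruence $\nu m\equiv n\bmod|n-m|^c$ is solvable and that its solution automatically has $\gcd(\nu,n-m)=1$. Solvability follows because $\gcd(m,|n-m|^c)=1$: indeed $\gcd(m,n)=1$ forces $\gcd(m,n-m)=1$ and hence $\gcd(m,|n-m|^c)=1$, so $m$ is invertible modulo $|n-m|^c$ and $\nu\equiv n m^{-1}$ is well defined. For the coprimality condition, I would reduce the congruence modulo any prime $\ell$ dividing $n-m$: there $n\equiv m$, so $\nu m\equiv n\equiv m\bmod\ell$, and since $\ell\nmid m$ this gives $\nu\equiv 1\bmod\ell$, whence $\ell\nmid\nu$; as this holds for every prime divisor $\ell$ of $n-m$, we obtain $\gcd(\nu,n-m)=1$.

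The only genuinely delicate point is making sure the automorphism of the cyclic group is honestly given by a power of the generator and that $\nu$ is well defined as an integer (only modulo $|n-m|^c$, which is all the statement asserts); once the action is recognized as conjugation by $t$, everything reduces to the two elementary congruence arguments above, so I do not expect a serious obstacle. I would simply present the comparison $s^{\nu m}=s^n$ and the prime-by-prime coprimality check, citing Lemma~3.5 for the relation $t^{-1}s^m t=s^n$ and the preceding corollary for $A_c(m,n)\cong\Z_{|n-m|^c}$.
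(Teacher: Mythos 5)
Your proposal is correct and follows essentially the same route as the paper: conjugation by $t$ is an automorphism of the cyclic group $\langle s\rangle\cong\Z_{|n-m|^c}$, hence $t^{-1}st=s^\nu$ with $\gcd(\nu,n-m)=1$, and comparing $s^{\nu m}=t^{-1}s^m t=s^n$ (from the relation verified in Lemma~\ref{relation}) gives the congruence. Your additional consistency checks (solvability via $\gcd(m,|n-m|^c)=1$ and the prime-by-prime verification that the solution is coprime to $n-m$) are correct but not needed for the statement as the paper proves it.
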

\begin{proof}
We already explained that $G_c(m,n)=\langle s \rangle \rtimes \langle t \rangle$.  As $s$ is a generator of the cyclic group of order 
$|n-m|^c$, we must have that also $t^{-1} s t$ is a generator of $\langle s \rangle$, which implies that 
$t^{-1} s t = s^\nu$ for some integer $\nu$ with $\gcd(\nu, n-m)=1$. 

As we already saw in the proof of Lemma~\ref{relation} (recall $s=\overline{e_1}$), we also have that 
\[ t^{-1} s^m t = s^n.\]
As $t^{-1} s^m t = s^{\nu m}$, it follows that $s^{\nu m}= s^n$, hence
\[ \nu m \equiv n \bmod |n-m|^c.\]
\end{proof}

Let $\varphi:G_c(m,n) \to G_c(m,n)$ be an automorphism, then, since $\langle s \rangle=\tau G_c(m,n)$, $\varphi$ induces an automorphism 
\[ \bar\varphi: G_c(m,n)/\langle s \rangle= \langle t \rangle \cong \Z \to  G_c(m,n)/\langle s \rangle= \langle t \rangle \cong \Z.\]
So $\bar\varphi(t) = t^{\pm 1}$. The following lemma is easy to check 
\begin{lemma}\label{minus1}
With the notation above, we have that 
\[ R(\varphi)<\infty \Leftrightarrow \bar\varphi(t)=t^{-1}.\]
\end{lemma}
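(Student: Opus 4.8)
The plan is to compute the number of Reidemeister classes $R(\varphi)$ directly from the structure of $G_c(m,n) = \langle s \rangle \rtimes \langle t \rangle$, where $\langle s \rangle = \tau G_c(m,n) \cong \Z_{|n-m|^c}$ is the (finite) torsion subgroup and $\langle t \rangle \cong \Z$ is the free part. Since $\langle s \rangle$ is characteristic, any automorphism $\varphi$ restricts to an automorphism of $\langle s \rangle$ and induces $\bar\varphi$ on the quotient $\langle t \rangle \cong \Z$, with $\bar\varphi(t) = t^{\varepsilon}$ for $\varepsilon = \pm 1$. The key observation is that the Reidemeister classes of $\varphi$ map onto the Reidemeister classes of $\bar\varphi$ under the projection $G_c(m,n) \to \langle t \rangle$, so I would analyze the fibers of this map over the classes of $\bar\varphi$.

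First I would treat the case $\bar\varphi(t) = t$. Here $\bar\varphi = \mathrm{id}$ on $\Z$, and since $R(\mathrm{id}_{\Z}) = \infty$ (the twisted conjugacy relation on $\Z$ for the identity is ordinary conjugacy, which is trivial, giving infinitely many classes), the classes of $\bar\varphi$ already form an infinite set. Because distinct Reidemeister classes in the quotient lift to distinct Reidemeister classes upstairs (an element $x$ is twisted-conjugate to $y$ in $G$ only if their images are twisted-conjugate in $\langle t \rangle$), this forces $R(\varphi) = \infty$. This handles the forward direction of the contrapositive, namely $\bar\varphi(t) = t \Rightarrow R(\varphi) = \infty$.

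Next I would treat $\bar\varphi(t) = t^{-1}$ and show $R(\varphi) < \infty$. In this case $\bar\varphi$ is the inversion automorphism of $\Z$, for which $R(\bar\varphi) = 2$ (the twisted conjugacy classes on $\Z$ under $x \mapsto -x$ are the even and odd integers). So there are only finitely many classes in the base, and it remains to bound the number of Reidemeister classes of $\varphi$ lying over each. Over a fixed coset $t^k \langle s \rangle$, the twisted-conjugacy action can be reduced to an action on the finite group $\langle s \rangle$; since $\langle s \rangle$ is finite, each fiber contributes only finitely many classes. I would make this precise by fixing a representative $t^k$ in the base class and describing how conjugation by elements $s^j t^\ell$ acts on $s^i t^k$, using the relation $t^{-1} s t = s^\nu$ from Proposition~\ref{nu}; the upshot is that the number of classes over each base class is at most $|\langle s \rangle| = |n-m|^c < \infty$. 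Summing over the two base classes gives $R(\varphi) < \infty$.

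The main obstacle is the second case: verifying that the fibers over each Reidemeister class of $\bar\varphi$ are genuinely finite requires a careful bookkeeping of how the full twisted-conjugacy action restricts to the finite torsion part, and in particular checking that the inversion on the $\langle t \rangle$-factor does not cause the fiber count to blow up. This is where the finiteness of $\tau G_c(m,n)$ does the real work: once one knows the base contributes only finitely many classes and the torsion subgroup is finite, a crude bound of $|n-m|^c$ classes per fiber suffices, so no delicate estimate is needed—only the correct reduction of the action to $\langle s \rangle$. The lemma is indeed ``easy to check'' precisely because we need no sharp count, only finite-versus-infinite, and the dichotomy is entirely governed by whether $\bar\varphi$ fixes or inverts the generator of the infinite-cyclic quotient.
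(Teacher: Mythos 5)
Your argument is correct, and it is the standard one; the paper itself offers no proof of this lemma (it is dismissed as ``easy to check''), so your write-up simply supplies the expected details. Both halves work: the projection $G_c(m,n)\to\langle t\rangle$ induces a well-defined surjection on Reidemeister classes, so $R(\varphi)\ge R(\bar\varphi)=\infty$ when $\bar\varphi=\mathrm{id}$; and when $\bar\varphi(t)=t^{-1}$ one gets $R(\varphi)\le R(\bar\varphi)\cdot|\langle s\rangle|=2|n-m|^c<\infty$. The only step you should make explicit in the second half is why every class lying over the class of $t^k$ actually meets the coset $t^k\langle s\rangle$: if $\bar g=\bar z\,\overline{t^k}\,\bar\varphi(\bar z)^{-1}$, then $z^{-1}g\varphi(z)$ is $\varphi$-conjugate to $g$ and lies in $t^k\langle s\rangle$, after which the crude bound of $|\langle s\rangle|$ disjoint classes per coset finishes the count.
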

It follows that $G_c(m,n)$ does not have the $R_\infty$--property if and only if there exists an automorphism 
$\varphi$ of $G_c(m,n)$ such that $\bar\varphi(t)=t^{-1}$.
We are now ready to prove the main theorem of this section which gives us the $R_\infty$--nilpotency degree of any 
Baumslag--Solitar group which is determined by coprime parameters $m$ and $n$. 
\begin{theorem} \label{main1}
Let $m,n$ be integers with $0<m\leq |n|$ and $\gcd(m,n)=1$. Let $p$ denote the largest integer such that 
$2^p|2 m +2$. Then, the $R_\infty$--nilpotency degree $r$ of $BS(m,n)$ is given by 
\begin{itemize}
\item In case $n<-1$ then $r=2$.
\item In case $n=-1$ (so $m=1$) then $r=\infty$.
\item In case $n=m$ (so $n=m=1$) then $r=\infty$.
\item In case $n-m=1$, then $r=\infty$.
\item In case $n-m=2$, then $r=p+2$.
\item In case $n-m\geq 3$, then $r=2$.

\end{itemize}
\end{theorem}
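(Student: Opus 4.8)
The plan is to replace $BS_c(m,n)$ by the concrete group $G_c(m,n)=\langle s\rangle\rtimes\langle t\rangle$ via Corollary~\ref{isomorph} and Proposition~\ref{nu}, and then to decide the $R_\infty$-property through Lemma~\ref{minus1}. By that lemma and the observation following it, $G_c(m,n)$ \emph{fails} to have the $R_\infty$-property precisely when there is an automorphism $\varphi$ with $\bar\varphi(t)=t^{-1}$. Since $\langle s\rangle=\tau G_c(m,n)$ is characteristic, any such $\varphi$ restricts to an automorphism of the cyclic group $\langle s\rangle\cong\Z_{|n-m|^c}$, so $\varphi(s)=s^k$ with $\gcd(k,n-m)=1$, while the condition $\bar\varphi(t)=t^{-1}$ forces $\varphi(t)=s^j t^{-1}$ for some $j$. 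The first step is thus to determine for which $c$ some pair $(k,j)$ yields a well-defined automorphism.

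The second step is the key algebraic computation. Requiring $\varphi$ to respect the relation $t^{-1}st=s^\nu$ means comparing $\varphi(t)^{-1}\varphi(s)\varphi(t)$ with $\varphi(s^\nu)$. Here $\varphi(t)^{-1}\varphi(s)\varphi(t)=t\,s^{-j}s^k s^j\,t^{-1}=t\,s^k\,t^{-1}=s^{k\nu^{-1}}$, using $tst^{-1}=s^{\nu^{-1}}$ with $\nu^{-1}$ the inverse of $\nu$ modulo $|n-m|^c$, while $\varphi(s^\nu)=s^{k\nu}$. Cancelling the unit $k$ reduces the requirement to the single congruence
\[ \nu^2\equiv 1 \pmod{|n-m|^c}. \]
Conversely, when this holds the choice $k=1$, $j=0$ (that is, $s\mapsto s$, $t\mapsto t^{-1}$) manifestly defines such an automorphism. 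Hence $G_c(m,n)$ has the $R_\infty$-property if and only if $\nu^2\not\equiv 1\pmod{|n-m|^c}$.

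The third step converts this into explicit divisibility. By Proposition~\ref{nu} we have $\nu m\equiv n$, and since $\gcd(m,n)=1$ gives $\gcd(m,n-m)=1$, the residue $m$ is invertible modulo $|n-m|^c$, so $\nu\equiv nm^{-1}$. Thus $\nu^2\equiv 1$ is equivalent to $n^2\equiv m^2$, i.e. to $|n-m|^c\mid(n-m)(n+m)$, equivalently (cancelling one factor $n-m$, whose absolute value is $|n-m|$) to
\[ |n-m|^{c-1}\mid(n+m). \]
Therefore the $R_\infty$-nilpotency degree $r$ is the least $c$ for which $|n-m|^{c-1}\nmid(n+m)$, and $r=\infty$ when no such $c$ exists.

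Finally I would read off the six cases from this divisibility. The cases $n=m$ (so $|n-m|=0$), $n-m=1$ (so $|n-m|=1$) and $n=-1$ (so $n+m=0$) give $|n-m|^{c-1}\mid(n+m)$ for every $c$, whence $r=\infty$. For $n-m=2$ the condition becomes $2^{c-1}\nmid(2m+2)$, so with $p$ the $2$-adic valuation of $2m+2$ the least admissible $c$ is $p+2$. In the remaining cases ($n<-1$, or $n-m\ge 3$) already $c=2$ fails: writing $n+m=(n-m)+2m$ reduces $|n-m|\mid(n+m)$ to $|n-m|\mid 2m$, and $\gcd(n-m,m)=1$ then forces $|n-m|\mid 2$, impossible once $|n-m|\ge 3$, so $r=2$. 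I expect the main obstacle to be the second step: pinning down that the existence of an automorphism inverting $t$ is controlled by the \emph{single} congruence $\nu^2\equiv 1$, and checking it is both necessary and sufficient; the ensuing number-theoretic case analysis is then routine.
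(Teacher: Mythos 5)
Your proposal follows essentially the same route as the paper's own proof: reduce to $G_c(m,n)=\langle s\rangle\rtimes\langle t\rangle$ via Corollary~\ref{isomorph}, characterize failure of the $R_\infty$--property by the existence of an automorphism sending $t$ to $s^jt^{-1}$ (Lemma~\ref{minus1}), derive the congruence $\nu^2\equiv 1\bmod |n-m|^c$, convert it via $\nu m\equiv n$ into $|n-m|^{c-1}\mid (n+m)$, and run the identical case analysis. The one slip is the case $n=m$: there $|n-m|=0$, the construction of $G_c(m,n)$ and the divisibility criterion do not apply (and ``$0^{c-1}\mid 2$'' is in fact false under the usual convention), so this case must be handled separately as the paper does, by observing that $BS(1,1)=\Z^2$ is abelian and hence no nilpotent quotient of it has the $R_\infty$--property.
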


\begin{proof} Let $m$ and $n$ be as in the statement of the theorem. \\
{\bf Let  $m=n$.} Then the fact that $\gcd(m,n)=1$ and $m>0$, implies that $m=n=1$. We have that $BS(1,1)=\Z^2=BS_c(m,n)$ (for all $c$) does not have the $R_\infty$--property, from 
which it follows that in this case the $R_\infty$--nilpotency index is $\infty$.

\medskip

So from now onwards we assume that $m\neq n$. We have to examine for which $c$, the group 
$G_c(m,n)$ has the $R_\infty$--property. So, we have to investigate, when $G_c(m,n)$ admits an automorphism $\varphi$ 
with $\bar\varphi(t)=t^{-1}$ (Lemma~\ref{minus1}). Such a morphism $\varphi$ satisfies
\begin{equation}\label{muandbeta}
\varphi(s)=s^\mu \mbox{ and } \varphi(t)=s^\beta t^{-1}\mbox{ for some }\mu, \beta\in \Z.
\end{equation}
In fact, given $\mu,\beta\in \Z$, the expressions  of \eqref{muandbeta} above determine an endomorphism 
of $G_c(m,n)$ if and only if the relation $t^{-1}st=s^\nu$ (where $\nu$ is as in Proposition~\ref{nu}) is preserved, i.e.\ it must hold that
\begin{eqnarray}
\varphi(t)^{-1} \varphi(s) \varphi(t) & = & \varphi(s)^{\nu}  \nonumber\\
& \Updownarrow & \nonumber\\
ts^{-\beta} s^\mu s^\beta t^{-1} &  = & s^{\mu\nu}\nonumber \\
& \Updownarrow & \nonumber \\
s^{\mu} &= & t^{-1}s^{\mu \nu}t = s^{\mu \nu^2} 
\end{eqnarray}
Moreover, such a $\varphi$ is an automorphsim if $s^\mu$ is a generator of $\langle s \rangle $, i.e.\ 
when $\gcd(\mu, |n-m| )=1$. In this case, the last condition is equivalent to 
\[  \nu^2 \equiv 1 \bmod |n-m|^c.\]
Moreover, as we also have that $\gcd(m,|n-m|)=1$, this is also equivalent to the requirement that 
\[ \nu^2 m^2 \equiv m^2 \bmod |n-m|^c.\]
Finally, using Proposition~\ref{nu}, which says that $\nu m \equiv n \bmod |n-m|^c$, we find that

\begin{center}
$G_c(m,n)$ does not have the $R_\infty$--property \\
$\Updownarrow$\\
$n^2\equiv m^2 \bmod |n-m|^c$\\
$\Updownarrow$\\
$n+m \equiv 0 \bmod |n-m|^{c-1}$
\end{center} 
So from now on we have to examine when the condition
\begin{equation}\label{theequation}
n+m\equiv 0 \bmod |n-m|^{c-1}
\end{equation}
is satisfied. 

\medskip

When $c=1$, the equation is always satisfied (reflecting the fact that finitely generated abelian groups do not have the 
$R_\infty$--property). 
So from now onwards we consider the case $c>1$.

\medskip

{\bf Let  $n=-m$.}  In this case   $n=-1$ and $m=1$, since $\gcd(m,n)=1$, then equation \eqref{theequation} is always 
satisfied. This shows that $BS(-1,1)$ (which is the fundamental group of the Klein Bottle) has an infinite $R_\infty$--nilpotency degree (although $BS(-1,1)$ does have the $R_\infty$--property \cite[Theorem 2.2]{GW1}). 

\medskip

{\bf Let  $n<-1$.} Then $|n-m|^{c-1}=(|n|+m)^{c-1} > |n+m|\neq 0$. This implies that the equation \eqref{theequation} is never satisfied. This means that in this case, the $R_\infty$--nilpotency degree of $BS(n,m)$ is 2.

\medskip

Now, we consider the case of positive $n$, where we already treated the case when $n=m$. So we have that $n=m+k$ for $k>0$.
Moreover, as $\gcd(n,m)=1$, we also have that $\gcd(k,m)=1.$
If equation \eqref{theequation} is satisfied, then $|n-m|=k$ divides $n+m=2 m+ k$, so $k|2m$ and as $\gcd (k,m)=1$, we must 
have that $k|2$, so $k=1$ or $k=2$.  

{\bf Let $n=m+k$ for $k\geq 3$.} From the considerations of the pargraph above we have that the $R_\infty$--nilpotency degree of $BS(m+k,k)$ is 2.

\medskip

{Let $n=m+1$.} In this case the equation \eqref{theequation} is again satisfied for all $c$ and hence the $R_\infty$--nilpotency degree of $BS(m+1,m)$ is $\infty$.

\medskip

{\bf Finally, let $n=m+2$.} Then equation \eqref{theequation} is of the form $2 m+2 \equiv 0 \bmod 2^{c-1}$.   This equation is satisfied exactly when $c\leq p+1$. It follows that the $R_\infty$--degree of 
$BS(m+2,m)$ (where $m$ is odd) is $p+2$. This finishes the proof.
\end{proof}
\section{The case where $\gcd(m,n)\neq 1$.}

\begin{lemma} Let $m,n$ be non-zero integers with $m\neq n$. If $d=\gcd(m,n)$, then 
\[ A_c(m,n)\cong \Z_d^c \oplus \Z_{\left|\frac{n-m}{d}\right|^c}.\]
\end{lemma}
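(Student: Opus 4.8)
The plan is to obtain the isomorphism type of $A_c(m,n) = \Z^c/\mathrm{Im}\,\varphi$ from the Smith normal form of the matrix \eqref{matrix-phi} defining $\varphi$, reducing the general case to the coprime computation of the preceding lemma. The starting observation is that $d = \gcd(m,n)$ divides every entry of $\varphi$: the diagonal entries all equal $n-m$, the subdiagonal entries all equal $-m$, and $d$ divides both (indeed $\gcd(n-m,m) = \gcd(n,m) = d$). I would therefore factor $\varphi = d\,M'$, where $M'$ is the bidiagonal matrix with diagonal entries $\frac{n-m}{d}$ and subdiagonal entries $-\frac{m}{d}$. Dividing out $d$ makes these coprime, since $\gcd\!\left(\frac{n-m}{d}, \frac{m}{d}\right) = \frac{1}{d}\gcd(n-m,m) = 1$, so $M'$ is precisely of the shape to which the previous lemma applies, with $a = \frac{n-m}{d}$ and $b = -\frac{m}{d}$.

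The second step uses that lemma to produce $U,V \in \GL(c,\Z)$ with $U M' V = D'$, where $D' = \mathrm{diag}(1,\dots,1,(\frac{n-m}{d})^c)$ is the Smith normal form of $M'$. The crucial point is that the scalar $d$ passes through the unimodular changes of basis: $U\varphi V = U(d M')V = d D'$, and
\[ dD' = \mathrm{diag}\Bigl(d,\dots,d,\; d\left|\tfrac{n-m}{d}\right|^{c}\Bigr) \]
is still diagonal with an intact divisibility chain $d \mid d \mid \cdots \mid d \mid d\left|\frac{n-m}{d}\right|^{c}$. By uniqueness of the Smith normal form this is the Smith normal form of $\varphi$, and reading off the cokernel yields $A_c(m,n) \cong \Z_d^{c-1} \oplus \Z_{d\,|(n-m)/d|^{c}}$.

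The last step is to reconcile this invariant-factor form with the asserted decomposition $\Z_d^{c} \oplus \Z_{|(n-m)/d|^{c}}$, and this is where I expect the real work to lie. The two forms agree exactly when the Chinese Remainder Theorem splits the final factor as $\Z_{d\,|(n-m)/d|^{c}} \cong \Z_d \oplus \Z_{|(n-m)/d|^{c}}$, which needs $\gcd\!\left(d, \frac{n-m}{d}\right) = 1$. I would therefore want to check whether this coprimality is automatic under the hypotheses in play or must be assumed: it can fail, for example for $(m,n) = (2,6)$, where $d = 2$, $\frac{n-m}{d} = 2$, and the two candidate groups have different $2$-ranks. So the main obstacle is pinning down the precise hypothesis under which the stated form is valid; in its absence the clean conclusion coming straight from the Smith normal form is $A_c(m,n) \cong \Z_d^{c-1} \oplus \Z_{d\,|(n-m)/d|^{c}}$.
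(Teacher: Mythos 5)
Your argument is essentially the paper's own: factor the matrix \eqref{matrix-phi} as $d\,M'$ with $M'$ of the coprime bidiagonal shape, apply the preceding lemma to get the Smith normal form of $M'$, and multiply through by $d$ to conclude that the Smith normal form of $\varphi$ is $\mathrm{diag}\bigl(d,\dots,d,\,d\left|\tfrac{n-m}{d}\right|^{c}\bigr)$. Moreover, the reservation you raise about the final step is justified rather than a gap in your own reasoning: the paper passes from this Smith form to the stated decomposition with only the words ``from which the result follows,'' but the cokernel read off from the invariant factors is $\Z_d^{c-1}\oplus\Z_{d\left|\frac{n-m}{d}\right|^{c}}$, and this coincides with the asserted $\Z_d^{c}\oplus\Z_{\left|\frac{n-m}{d}\right|^{c}}$ only when $\gcd\bigl(d,\tfrac{n-m}{d}\bigr)=1$. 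Your counterexample is correct: for $(m,n)=(2,6)$ and $c=1$ one has $A_1(2,6)=\Z/4\Z$, whereas the lemma would give $\Z_2\oplus\Z_2$. So the lemma as printed is false in general, and your ``clean conclusion'' $\Z_d^{c-1}\oplus\Z_{d\left|\frac{n-m}{d}\right|^{c}}$ is the correct statement. It is worth noting that the only consequence the paper actually extracts is that $d\,A_c(m,n)\cong\Z_{\left|\frac{n-m}{d}\right|^{c}}$ is a cyclic $t$-invariant subgroup, and this does follow from the corrected form, since $d\cdot\Z_d=0$ and $d\cdot\Z_{dk}\cong\Z_k$; so the remainder of Section~5 is unaffected by the error you identified.
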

\begin{proof}
Note that the matrix $\eqref{matrix-phi}$ equals 
\[ d \left( \begin{array}{cccccc}
\frac{n-m}{d} & 0 & 0 & \cdots & 0 & 0 \\
-\frac{m}{d} &\frac{ n-m}{d} & 0 & \cdots & 0 & 0 \\
0 & -\frac{m}{d} & \frac{n-m}{d} & \cdots & 0 & 0\\
\vdots & \vdots & \vdots & \ddots & \vdots & \vdots  \\
0 & 0 & 0 & \cdots &\frac{n-m}{m}& 0\\
0 & 0 & 0 & \cdots & -\frac{m}{d} & \frac{n-m}{d}
\end{array}\right).
\]
with $\gcd(\frac{m}{d}, \frac{n-m}{d})=1$. It follows that the Smith normal form of $\eqref{matrix-phi}$ equals
\[ \left( \begin{array}{ccccc}
d & 0 &   \cdots &  0 & 0 \\
0 & d &   \cdots & 0 & 0 \\
 \vdots & \vdots &  \ddots & \vdots & \vdots \\
0 & 0 &\cdots & d & 0 \\
0 & 0 & \cdots & 0 & d \left|\frac{n-m}{d}\right|^c \end{array}\right),\]
from which the result follows.
\end{proof}
It follows that $d A_c(m,n)\cong \Z_{\left|\frac{n-m}{d}\right|^c}$ is a cyclic subgroup of $A_c(m,n)$ and as this subgroup
is invariant under the action of $\langle t \rangle$, the semidirect product $(d A_c(m,n))\rtimes \langle t \rangle$, is a subgroup
of $G_c(m,n)$.  
\begin{lemma}
Let $0<m \leq |n|$ with $m\neq n$ and take $d=\gcd(m,n)$. 
Then $(d A_c(m,n) )\rtimes \langle t \rangle$ is a subgroup of $G_c(m,n)$ and 
we have that 
\[ (d A_c(m,n) )\rtimes \langle t \rangle \cong G_c\left( \frac{m}{d}, \frac{n}{d}\right).\]
\end{lemma}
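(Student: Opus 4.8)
The subgroup claim is already contained in the discussion preceding the lemma, so I would concentrate on the isomorphism. Write $m'=m/d$ and $n'=n/d$, so that $\gcd(m',n')=1$ and $G_c(m',n')$ is built from the matrix $\varphi'$ having $n'-m'$ on the diagonal and $-m'$ on the subdiagonal, together with the \emph{same} auxiliary matrix $M$ that defines $\psi$. The crucial algebraic fact, already recorded in the proof of the previous lemma, is that the matrix \eqref{matrix-phi} representing $\varphi$ factors as $\varphi=d\varphi'$. Consequently ${\rm Im}\,\varphi = d\,{\rm Im}\,\varphi'$.

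The plan is to produce the isomorphism from multiplication by $d$ on $\Z^c$. This map is injective and sends ${\rm Im}\,\varphi'$ bijectively onto $d\,{\rm Im}\,\varphi' = {\rm Im}\,\varphi$; passing to quotients it therefore induces an isomorphism of abelian groups
\[ \theta : A_c(m',n') = \frac{\Z^c}{{\rm Im}\,\varphi'} \longrightarrow \frac{d\Z^c}{d\,{\rm Im}\,\varphi'} = d\,A_c(m,n), \]
where the last equality holds because $d\,A_c(m,n)$ consists precisely of the classes $\overline{dx}$ with $x\in\Z^c$, i.e.\ of $d\Z^c/(d\Z^c\cap d\,{\rm Im}\,\varphi')=d\Z^c/d\,{\rm Im}\,\varphi'$. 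First I would check well-definedness, injectivity and surjectivity of $\theta$, all of which are immediate from the injectivity of multiplication by $d$ together with the identity ${\rm Im}\,\varphi = d\,{\rm Im}\,\varphi'$.

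The key step is to verify that $\theta$ intertwines the two $\langle t\rangle$--actions. Here the decisive point is that the matrix $M$ defining $\psi$ is the same for $(m,n)$ and for $(m',n')$ --- it depends only on $c$ --- and that $M$ commutes with the scalar map $d\cdot I$. Since $\psi$ acts on both $A_c(m',n')$ and $d\,A_c(m,n)$ as the map induced by $M$ (on $\Z^c$, respectively on $d\Z^c$), the commuting relation $M\circ(dI)=(dI)\circ M$ descends to $\theta\circ\psi=\psi\circ\theta$. Thus $\theta$ is $\langle t\rangle$--equivariant. I expect this equivariance to be the only genuinely substantive point; everything else is bookkeeping.

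Finally I would assemble the isomorphism of groups. I define $\Theta$ on $G_c(m',n')=A_c(m',n')\rtimes\langle t\rangle$ by letting it act as $\theta$ on the abelian part and fix $t$. Equivariance guarantees that $\Theta$ respects the defining relation $t^{-1}at=\psi(a)$ of the semidirect product, so $\Theta$ is a homomorphism into $(d\,A_c(m,n))\rtimes\langle t\rangle$; as $\theta$ is a bijection and $t\mapsto t$, the map $\Theta$ is bijective and hence an isomorphism. This yields $(d\,A_c(m,n))\rtimes\langle t\rangle\cong G_c(m/d,n/d)$, as required.
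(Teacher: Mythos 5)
Your proposal is correct and follows essentially the same route as the paper: factor $\varphi=d\varphi'$, identify $d\,A_c(m,n)=(d\Z)^c/d\,\mathrm{Im}\,\varphi'\cong \Z^c/\mathrm{Im}\,\varphi'=A_c(m/d,n/d)$ via multiplication by $d$, and observe that the $t$--action is unchanged under this identification. You merely spell out the $\langle t\rangle$--equivariance (via $M$ commuting with $d\cdot I$) that the paper dismisses as ``easy to see.''
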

\begin{proof}
The fact that $(d A_c(m,n) )\rtimes \langle t \rangle$ is a subgroup of $G_c(m,n)$ was already discussed before the statement of the lemma. Let $\varphi':\Z^c \to \Z^c$ be the morphism represented by the matrix, 
\[ \left( \begin{array}{cccccc}
\frac{n-m}{d} & 0 & 0 & \cdots & 0 & 0 \\
-\frac{m}{d} &\frac{ n-m}{d} & 0 & \cdots & 0 & 0 \\
0 & -\frac{m}{d} & \frac{n-m}{d} & \cdots & 0 & 0\\
\vdots & \vdots & \vdots & \ddots & \vdots & \vdots  \\
0 & 0 & 0 & \cdots &\frac{n-m}{m}& 0\\
0 & 0 & 0 & \cdots & -\frac{m}{d} & \frac{n-m}{d}
\end{array}\right),
\]
then $\varphi= d \varphi'$. We have that 
\[ d A_c(m,n)= d \frac{\Z^c}{{\rm Im}\,\varphi} = \frac{(d\Z)^c}{d {\rm Im }\,\varphi'}\cong \frac{\Z^c}{ {\rm Im }\,\varphi'}=
A_c\left( \frac{m}{d}, \frac{n}{d}\right).\]
It is now easy to see that under the identification $d A_c(m,n) \cong  A_c\left( \frac{m}{d}, \frac{n}{d}\right)$ the  action of
$t$ is still the same as what we had before and so 
\[ (d A_c(m,n) )\rtimes \langle t \rangle \cong G_c\left( \frac{m}{d}, \frac{n}{d}\right).\]

\end{proof}
\begin{lemma} Let $0<m \leq |n|$ with $m\neq n$ and take $d=\gcd(m,n)$.\\
Then, if $G_c\left( \frac{m}{d}, \frac{n}{d}\right)$ has property $R_\infty$, then also $G_c(m,n)$ has property $R_\infty$.
\end{lemma}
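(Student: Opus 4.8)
The plan is to argue by contraposition: I will show that if $G_c(m,n)$ fails to have the $R_\infty$--property, then so does $G_c\!\left(\tfrac{m}{d},\tfrac{n}{d}\right)$. Throughout write $G=G_c(m,n)$ and $A=A_c(m,n)=\tau G$, and let $H=(dA)\rtimes\langle t\rangle$, which by the previous lemma is isomorphic to $G_c\!\left(\tfrac{m}{d},\tfrac{n}{d}\right)$, with $\tau H=dA$. The basic tool is the criterion behind Lemma~\ref{minus1}: for a group of the form (finite)$\rtimes\langle t\rangle$, an automorphism $\varphi$ can have $R(\varphi)<\infty$ only if the automorphism it induces on the infinite cyclic quotient by the torsion subgroup sends $t\mapsto t^{-1}$ (if it fixed $t$, the Reidemeister number of that quotient, and hence of $\varphi$, would be infinite). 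So ``$G$ does not have $R_\infty$'' provides an automorphism $\varphi\in\Aut(G)$ with $\bar\varphi(t)=t^{-1}$, and conversely it suffices to produce $\chi\in\Aut(H)$ with $\bar\chi(t)=t^{-1}$ to conclude, via Lemma~\ref{minus1} applied to $H\cong G_c\!\left(\tfrac{m}{d},\tfrac{n}{d}\right)$, that $H$ does not have $R_\infty$ either.

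First I would record that $dA$ is characteristic in $G$: indeed $A=\tau G$ is characteristic, and for any automorphism $\phi$ of the abelian group $A$ one has $\phi(dA)=d\,\phi(A)=dA$, so $\varphi$ restricts to an automorphism $\varphi|_{dA}$ of $dA=\tau H$. Writing $\varphi(t)=a_0t^{-1}$ with $a_0\in A$ and using that $A$ is abelian, the fact that $\varphi$ respects the defining relation $t^{-1}at=\psi(a)$ of $G$ yields, for every $a\in A$,
\[
\varphi(\psi(a))=\varphi(t)^{-1}\varphi(a)\varphi(t)=t\,\varphi(a)\,t^{-1}=\psi^{-1}(\varphi(a)),
\]
so that $\varphi\circ\psi=\psi^{-1}\circ\varphi$ on $A$, in particular on $dA$.

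The key point is that this last identity is exactly what is needed to define a homomorphism on $H$ sending $t\mapsto t^{-1}$. Concretely, set $\chi|_{dA}=\varphi|_{dA}$ and $\chi(t)=t^{-1}$. To check that $\chi$ respects the semidirect product relation $t^{-1}at=\psi(a)$ on $H$, I compute for $a\in dA$
\[
\chi(t)^{-1}\chi(a)\chi(t)=t\,\varphi(a)\,t^{-1}=\psi^{-1}(\varphi(a))=\varphi(\psi(a))=\chi(\psi(a)),
\]
using the identity just established (and that $dA$ is $\psi$--invariant, so $\chi(\psi(a))=\varphi(\psi(a))$). Hence $\chi$ is a well-defined endomorphism of $H$. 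It is an automorphism because it restricts to the automorphism $\varphi|_{dA}$ of $dA$ and induces $t\mapsto t^{-1}$ on $H/dA\cong\Z$, i.e.\ it is bijective on both the normal subgroup and the quotient in $1\to dA\to H\to\Z\to1$. Since $\bar\chi(t)=t^{-1}$, Lemma~\ref{minus1} gives $R(\chi)<\infty$, which completes the contrapositive.

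I expect the only real bookkeeping subtlety to be the $R_\infty$--criterion in the non-coprime setting: Lemma~\ref{minus1} is stated for $G_c(m',n')$ with $\gcd(m',n')=1$, so for $G$ itself I would invoke (or quickly reprove) the single direction I actually use, namely that $R(\varphi)<\infty$ forces $\bar\varphi(t)=t^{-1}$, which follows from the standard fact that $R(\varphi)$ dominates the Reidemeister number of the induced automorphism on the torsion--free quotient $\Z$. The genuinely load-bearing observation, by contrast, comes essentially for free: the twisted compatibility $\varphi\circ\psi=\psi^{-1}\circ\varphi$ forced by $\varphi$ being an automorphism of $G$ is precisely the relation that makes $\chi$ a homomorphism on the subgroup $H$, so no new choices or computations are required.
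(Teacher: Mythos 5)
Your proof is correct and follows essentially the same route as the paper: pass to the contrapositive, use that $dA_c(m,n)$ is characteristic to restrict to the subgroup $(dA_c(m,n))\rtimes\langle t\rangle\cong G_c\!\left(\tfrac{m}{d},\tfrac{n}{d}\right)$, and invoke the $t\mapsto t^{-1}$ criterion. The only (harmless) difference is that the paper first composes the given automorphism with an automorphism $\psi_\alpha$ fixing $A_c(m,n)$ and sending $t\mapsto t\alpha$ so that it literally restricts to the subgroup, whereas you build the restricted automorphism $\chi$ directly and verify the compatibility $\varphi\circ\psi=\psi^{-1}\circ\varphi$ by hand.
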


\begin{proof}
First let us remark that for any $\alpha\in A_c(m,n)$ there is an automorphism $\psi_\alpha$ of $G_c(m,n)=A_c(m,n)\rtimes \langle t\rangle$ with $\psi_\alpha(a)=a,\;\forall a \in A_c(m,n)$ and $\psi_\alpha(t)= t \alpha $.

Now suppose that $\psi\in \Aut(G_c(m,n))$ is an automorphism with $R(\psi)<\infty$. This means that 
$\psi(t)=\alpha t^{-1}$ for some $\alpha\in A_c(m,n)$. After composing $\psi$ with $\psi_\alpha$, we may assume that 
$\psi(t)=t^{-1}$. Since we also have that $\psi ( d A_c(m,n))=d A_c(m,n)$ (since $A_c(m,n)$ is a characteristic 
subgroup of $G_c(m,n)$), we have that $\psi$ restricts to an automorpism of $(d A_c(m,n) )\rtimes \langle t \rangle \cong G_c\left( \frac{m}{d}, \frac{n}{d}\right)$ with finite Reidemeister number. 
This shows that if $G_c(m,n)$ does not have property $R_\infty$, then also 
$G_c\left( \frac{m}{d}, \frac{n}{d}\right)$ does not have this property.
\end{proof}

The main result of this section is the following result.

\begin{theorem}
Let $0<m \leq |n|$  and take $d=\gcd(m,n)$. 
Let $p$ denote the largest integer such that 
$2^p|2 \frac{m}{d} +2$. Then, the $R_\infty$--nilpotency degree $r$ of $BS(m,n)$ is given by 
\begin{itemize}
\item In case $n<0$ and $n\neq -m$, then $r=2$.
\item In case $n=-m$ then $r=\infty$.
\item In case $n=m$  then $r=\infty$.
\item In case $n-m=d$, then $r=\infty$.
\item In case $n-m=2d$, then $2\leq r\leq p+2$.
\item In case $n-m\geq 3d$, then $r=2$.
\end{itemize}
\end{theorem}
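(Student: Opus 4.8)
The plan is to combine the two structural results of this section with the coprime case (Theorem~\ref{main1}). Write $m'=m/d$ and $n'=n/d$, so that $\gcd(m',n')=1$ and $0<m'\le|n'|$, and let $r'$ denote the $R_\infty$--nilpotency degree of $BS(m',n')$ furnished by Theorem~\ref{main1}; note that the integer $p$ is the same for $(m',n')$ as in the statement, since $2\frac{m}{d}+2=2m'+2$. For the upper bound I would argue along the chain: if $G_c(m',n')$ has property $R_\infty$, then by the last lemma so does $G_c(m,n)$, and since Proposition~\ref{isomorphism} exhibits $G_c(m,n)$ as the quotient of $BS_c(m,n)$ by the \emph{characteristic} subgroup $[\tau BS_c(m,n),\tau BS_c(m,n)]$, it follows that $BS_c(m,n)$ has property $R_\infty$ as well (any group admitting a characteristic quotient with property $R_\infty$ has that property, because every automorphism descends and Reidemeister numbers do not decrease under passage to a quotient). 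Hence $BS_c(m,n)$ has $R_\infty$ whenever $c\ge r'$, giving $r\le r'$; the same quotient principle applied to the characteristic surjection $BS_{c+1}(m,n)\twoheadrightarrow BS_c(m,n)$ shows that the set of admissible $c$ is upward closed, so $r$ is well defined.

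For the lower bound, $BS_1(m,n)$ is the abelianization of $BS(m,n)$, namely $\Z\oplus\Z_{|m-n|}$ (or $\Z^2$ when $n=m$); being finitely generated abelian it does not have property $R_\infty$, so $r\ge 2$ in every finite case. Translating the six regimes into conditions on $(m',n')$ and reading off $r'$ from Theorem~\ref{main1}, the cases $n<0,\ n\ne -m$ and $n-m\ge 3d$ give $r'=2$, whence $r=2$; the case $n-m=2d$ gives $r'=p+2$, whence $2\le r\le p+2$. I do not expect to sharpen this last lower bound---this is exactly why the theorem records a range rather than an equality here, the obstruction being that $BS_c(m,n)$ is genuinely larger than $G_c(m,n)$ once $c\ge 3$ and $d>1$.

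It remains to treat the three cases where $r=\infty$, for which I would exhibit, for every $c$, an automorphism of $BS_c(m,n)$ with finite Reidemeister number. The governing fact (the argument of Lemma~\ref{minus1}, valid because $\tau BS_c(m,n)$ is finite by Lemma~\ref{hirsch}) is that such a group, whose maximal torsion--free quotient is free abelian, satisfies $R(\varphi)<\infty$ exactly when the automorphism induced on that quotient has no eigenvalue $1$. When $n=-m$ the assignment $a\mapsto a^{-1},\ b\mapsto b$ is an automorphism of $BS(m,-m)$ (the defining relation is preserved since $ab^ma^{-1}=b^{-m}$), it inverts the infinite cyclic quotient $\langle t\rangle$, and so has finite $R$ on each $BS_c$. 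When $n=m$ one argues directly (the lemmas of Section~3 assumed $m\ne n$): using $[a,b^m]=1$ every $\gamma_i/\gamma_{i+1}$ is finite, so $BS_c(m,m)/\tau\cong\Z^2$, and $a\mapsto a^{-1},\ b\mapsto b^{-1}$ induces $-I$ on this $\Z^2$, with $\det(-I-I)=4\ne0$, again yielding finite $R$. In both cases $r=\infty$.

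The genuinely delicate case, and the one I expect to be the main obstacle, is $n-m=d$. Here $BS(m,m+d)$ itself has property $R_\infty$, so no automorphism of the ambient group can invert $t$; the required automorphisms exist only at each nilpotent level. Since $(n-m)/d=1$ one has $A_c(m,n)\cong\Z_d^{\,c}$, on which $t$ acts by the unipotent matrix $\psi$ of Section~3, and an automorphism of $G_c(m,n)$ inverting $t$ amounts to an automorphism $\Phi$ of $\Z_d^{\,c}$ with $\Phi\psi\Phi^{-1}=\psi^{-1}$. As $\psi$ and $\psi^{-1}$ are both single unipotent Jordan blocks they are conjugate (for instance $\mathrm{diag}(1,-1)$ conjugates $\psi$ to $\psi^{-1}$ when $c=2$, and an explicit intertwiner exists in general), so $G_c(m,n)$ has an automorphism inverting $t$. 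The remaining difficulty is to promote this to $BS_c(m,n)$: for $c\le 2$ one has $[\tau BS_c,\tau BS_c]\subseteq\gamma_3=1$ and $BS_c(m,n)=G_c(m,n)$, but for $c\ge 3$ one must either verify that $\tau BS_c(m,m+d)$ is abelian or lift $\Phi$ through the characteristic extension $[\tau BS_c,\tau BS_c]\hookrightarrow BS_c(m,n)\twoheadrightarrow G_c(m,n)$. Carrying out this lifting---equivalently, constructing the inverting automorphism directly from the presentation of $BS_c(m,m+d)$, paralleling the exponent computation of Theorem~\ref{main1}---is the crux; once it is in place, $R(\varphi)<\infty$ for all $c$ and $r=\infty$.
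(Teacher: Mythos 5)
Your treatment of every case except $n-m=d$ matches the paper's: the upper bounds come from the chain $BS_c(m,n)\twoheadrightarrow G_c(m,n)\supseteq G_c\!\left(\frac{m}{d},\frac{n}{d}\right)$ combined with Theorem~\ref{main1}, the lower bound $r\ge 2$ from the abelianization, and the cases $n=\pm m$ from the explicit automorphisms $a\mapsto a^{-1}$, $b\mapsto b^{\mp1}$ together with the eigenvalue criterion. The one place you flag as "the crux" is also the one place your argument genuinely fails to close: for $n-m=d$ with $d>1$ you construct an automorphism of the \emph{quotient} $G_c(m,n)$ inverting $t$, but what is needed is an automorphism of $BS_c(m,n)$ itself with finite Reidemeister number, and an automorphism of a characteristic quotient does not lift in general. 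Since $[\tau BS_c(m,n),\tau BS_c(m,n)]$ may be nontrivial for $c\ge 3$, this is a real gap, not a routine verification; and even the quotient-level construction leaves unproved that the unipotent Jordan block and its inverse are conjugate in $\GL(c,\Z_d)$ for arbitrary $d$ and $c$.

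The paper closes this case by a different and more elementary route that bypasses $G_c(m,n)$ entirely. Writing $m=kd$ and $n=(k+1)d$, one shows by induction that $b^d\in\gamma_c(BS(kd,(k+1)d))$ for every $c$: from $b^d\in\gamma_c$ one gets $[a,b^{kd}]\in\gamma_{c+1}$, and the defining relation gives $[a,b^{kd}]=b^{-d}$. Hence $BS_c(kd,(k+1)d)$ coincides with the $c$-th nilpotent quotient of $\langle a,b\mid b^d\rangle\cong\Z*\Z_d$, on which $a\mapsto a^{-1}$, $b\mapsto b$ is visibly an automorphism; it inverts the infinite cyclic direction and so has finite Reidemeister number on each nilpotent quotient, giving $r=\infty$ directly at the level of $BS_c$. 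If you wish to keep your linear-algebra construction you must either prove that $\tau BS_c(kd,(k+1)d)$ is abelian (so that $BS_c=G_c$) or carry out the lifting through the extension $[\tau BS_c,\tau BS_c]\hookrightarrow BS_c\twoheadrightarrow G_c$; as written, the case $n-m=d$ is not established.
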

\begin{remark}
As $d=\gcd(m,n)$, the difference $n-m$ is a multiple of $d$, so the theorem above does treat all possible cases.
\end{remark}
\begin{proof}
We will first deal with a few special cases and then treat the general case.

\medskip

{\bf  Let $n=m$.} In this case, there is an automporphism $\psi$ of $BS(m,m)$ mapping $a$ to $a^{-1}$ and $b$ to $b^{-1}$.
This automorphism induces minus the  identity map on $BS_1(m,m)=\frac{\gamma_1(BS(m,m))}{\gamma_2(BS(m,m))}\cong \Z^2$.  It now follows that the induced map $\bar\psi$ on any quotient $BS_c(m,m)$ has $-1$ as an eigenvalue and 
hence $R(\bar\psi)<\infty$ ( See \cite{DG2}). It follows that the $R_\infty$--nilpotency index of $BS(m,m)$ is $\infty$.

\medskip

{\bf Let  $n=-m$.}  Now consider the automorphism $\psi$ of $BS(m,-m)$ mapping $a$ to $a^{-1}$ and $b$ to $b$. Then $\psi$ induces a map on $BS_1(m,-m)\cong \Z \oplus \Z_{2m}$, which is minus the identity on the $\Z$--factor and the  identity on the 
$\Z_{2m}$ factor. The same argument as in the previous case now allows us to conclude that the $R_\infty$--nilpotency index of $BS(m,-m)$ is $\infty$.

\medskip

{\bf  Let  $n=m+d$.} So $m= kd $ and $n=(k+1)d$ for some positive integers $k$ and $d$. \\
We claim that $b^d\in \gamma_c(BS(kd,(k+1)d))$ for all $c\geq 1$. This claim is certainly correct for $c=1$.\\
Now, fix $c\geq 1$ and assume that  that $b^d\in \gamma_c(BS(kd, (k+1) d))$. Then also 
$b^{kd}\in  \gamma_c(BS(kd, (k+1) d))$ and hence 
\[ [a, b^{kd} ] \in \gamma_{c+1}(BS(kd, (k+1) d)).\]
But as $a^{-1} b^{kd} a= b^{(k+1) d}$, we have that $[a,b^{kd}]= a^{-1} b^{-kd} a b^{kd}= b^{- d}\in \gamma_{c+1}(BS(kd, (k+1) d))$.
By induction, this finishes the proof of the claim. 

It follows that  the group $BS(kd, (k+1)d)$ and the goup 
\[ C(d)=\langle a,b\, |\, a^{-1} b^{kd} a = b^{(k+1)d},\; b^d\rangle =  \langle a,b\, | \, b^d\rangle\]
have isomorphic nilpotent quotients, i.e.
\[ \frac{BS(kd, (k+1)d)}{\gamma_{c+1}(BS(kd, (k+1)d)  )}\cong 
\frac{C(d)}{\gamma_{c+1} (C(d))}.\]
Now, it is easy to see that $C(d)$ has an automorphism $\psi$ mapping $a$ to $a^{-1}$ and $b$ to $b$, such that 
$\psi$ induces an automorphism $\bar\psi$ on $\frac{C(d)}{\gamma_{c+1} (C(d))}$ with finite Reidemeister number.
It follows that the $R_\infty$--nilpotency degree of $BS(kd, (k+1)d)$ is infinite.

\medskip

{\bf All the other cases.} As a finitely generated abelian group never has the $R_\infty$--property, we have that 
$r\geq 2$. Now, assume that $\psi$ is an automorphims of $BS_c(m,n)$ with $R(\psi)<\infty$. Then, $\psi$ induces an automorphism $\bar\psi$ of $G_c(m,n)$ (since we divide out a characteristic subgroup to go from $BS_c(m,n)$ to $G_c(m,n)$ by 
Proposition~\ref{isomorphism}) with $R(\psi)<\infty$. It follows that the $R_\infty$--nilpotentcy degree is bounded above by the 
smallest $c$ for which $G_c(m,n)$ has property $R_\infty$. In turn, this number is bounded above by the smallest $c$ 
such that $G_c\left( \frac{m}{d}, \frac{n}{d}\right)$ has the $R_\infty$--property. This is exactly what we determined in the 
proof of Theorem~\ref{main1}, which finishes the proof. 
\end{proof}


\begin{thebibliography}{99}



\bibitem{DG} Dekimpe, K.;  Gon\c{c}alves, D.: The $R_\infty$ property for free groups, free nilpotent groups and
 free solvable groups. Bull. Lond. Math. Soc.  46 no. 4, 737-746  (2014).


\bibitem{DG1} Dekimpe, K.; Gon\c calves, D.: The  $R_{\infty}$ property   for abelian groups.  TMNA  46 no. 2,    
 773-784  (2015).  



\bibitem{DG2} Dekimpe, K.; Gon\c calves, D.:  The $R_{\infty}$  property for nilpotent quotients of surface groups. 
 Trans. London Math. Soc.  3 (1),  28-46  (2016).  doi: 10.1112/tlms/tlw002 



\bibitem{Fe}  Fel'shtyn, A. L.: The Reidemeister number of any automorphism of a Gromov hyperbolic group is infinite.
Zapiski Nauchnych Seminarov POMI 279, 229-241  (2001).




\bibitem{FG}  Fel'shtyn, A.L.; Gon\c calves, D. L.: The Reidemeister number of any automorphism of a Baumslag-Solitar group is infinite. Geometry and dynamics of groups and spaces, 399--414, Progr. Math., 265, Birkh\"auser, Basel, 2008.

\bibitem{FN} Fel'shtyn, A.L.;  Nasybullov, T.: The $R_\infty$ and  $S_\infty$ properties for linear algebraic groups.
 J. Group Theory  19  (2016),  no. 5, 901--921.

\bibitem{FT}  Fel'shtyn, A.L.;   Troitsky E.: Twisted conjugacy classes in residually finite groups
  arXiv:math/1204.3175v2 2012. 20 pp.



\bibitem{GW} Gon\c calves D.; Wong, P.:  Twisted conjugacy classes in wreath products. Internat. J. Algebra Comput. 16  no. 5, 875-886 (2006).


\bibitem{GW1} Gon\c calves D.; Wong, P.: Twisted conjugacy classes in nilpotent groups. J. Reine Angew. Math. 633, 11-27
(2009).

 
 \bibitem{JLL} Jo, J.H. ;  Lee, J.B. ;  Lee, S.R.: The $R_\infty$ property for Houghton's groups.
 Algebra Discrete Math.  23  (2017),  no. 2, 249--262.
 
\bibitem {Le}  Levitt, G.: On the automorphism group of generalized Baumslag--Solitar groups. Geom. Topol. 11, 
 473--515 (2007).
 
 
\bibitem {LL} Levitt, G.;  Lustig, M.: Most automorphisms of a hyperbolic group have very simple dynamics. Ann.
Scient. Ec. Norm. Sup. 33, 507-517 (2000).


\bibitem{S} Sims, C.:  Computation with finitely presented groups.
Encyclopedia of Mathematics and its Applications, 48. Cambridge University Press, Cambridge,  1994.


\bibitem{STW} Stein, M,;  Taback, J.;  Wong, P.:  Automorphisms of higher rank lamplighter groups.
 Internat. J. Algebra Comput.  25  (2015),  no. 8, 1275--1299
 
\end{thebibliography}
\end{document}